\theoremstyle{plain}
\newtheorem{theorem}{Theorem}[section]
\newtheorem{lemma}[theorem]{Lemma}
\newtheorem{proposition}[theorem]{Proposition}
\newtheorem{corollary}[theorem]{Corollary}
\theoremstyle{definition}
\newtheorem{example}[theorem]{Example}
\newcommand*{\medcap}{\mathbin{\scalebox{1.5}{\ensuremath{\cap}}}}%
\newcommand*{\medcup}{\mathbin{\scalebox{1.5}{\ensuremath{\cup}}}}%
\numberwithin{equation}{section} \setcounter{tocdepth}{1}
\newcounter{hypocounter}
\renewcommand\thehypocounter{(H\arabic{hypocounter})} 
\begin{document}

\title[Thick attractors with intermingled basins]{Thick attractors with intermingled basins}

\author[Abbas Fakhari and Ale Jan Homburg]{Abbas Fakhari and Ale Jan Homburg}

    \address{A. Fakhari\\ Department of Mathematics,
Shahid Beheshti University, 19839 Tehran, Iran}
	\email{a\_fakhari@sbu.ac.ir}

	\address{A.J. Homburg\\ KdV Institute for Mathematics, University of Amsterdam, Science park 107, 1098 XG Amsterdam, Netherlands\newline Mathematical Institute, University of Leiden, PO Box 9512, 2300 RA Leiden, Netherlands}
	\email{a.j.homburg@uva.nl}

\keywords{Skew product systems, Intermingled basins, Thick attractors}

\begin{abstract}
We construct various novel and elementary examples of dynamics with {metric} attractors that have intermingled basins.
A main ingredient is the introduction of random walks along orbits of a given dynamical system.
We develop theory for it and use it in particular to provide examples of thick {metric} attractors with intermingled basins.
\end{abstract}
	
\maketitle

\section{Introduction}

Consider a continuous map $F:  X\to X$ on a metric space $X$, equipped with a probability (reference) measure  $\mu$.
A closed subset $A \subset  X$ will be called a {\em {metric} attractor} of $F$ if it satisfies two
conditions:
\begin{enumerate}

\item The {\em basin of attraction}
\[
 \rho(A) := \left\{  x \in X \; ; \; \omega(x) \subset A \right\}
\]
has positive measure $\mu (\rho (A)) >0$.

\item
There is no strictly smaller closed set $A' \subset A$ so that $\rho (A')$ coincides with $\rho(A)$
up to a set of measure zero.

\end{enumerate}
The basin of attraction is not required to be an open set. This notion of attractor was coined by Milnor \cite{MR0790735} to have a definition that is less restrictive
than topological definitions making use of asymptotic stability. One also finds the term Milnor attractor in the literature.
A {metric} attractor is called a {\em minimal ({metric}) attractor}
if moreover
\begin{enumerate}

\item[(3)]
 There is no strictly smaller closed set $A'\subset A$ for which $\rho(A')$ has positive
measure.

\end{enumerate}

We also adopt from  \cite{MR0790735} the definition of
{\em likely limit set} $A$ of $F$ as the  smallest closed subset of $X$ with the
property that $\omega (x) \subset A$ for every point $x \in X$ outside of a set of measure zero.

The basins of attraction of two attractors $A_1$ and $A_2$ are said to be {\em intermingled basins of attraction}  if they are measure theoretically dense in each other: if one basin meets an open set $U$ in a set of positive measure then also the other basin meets $U$ in a set of positive measure \cite{MR1206103}.
A key example of a system with intermingled {metric} attractors is due to Kan \cite{MR1254075}.
It involves a smooth map on the annulus $\mathbb{T} \times [0,1]$, that fixes the boundaries
$\mathbb{T} \times \{0\}$ and $\mathbb{T} \times \{1\}$. The boundaries in this example are {metric} attractors.
An explicit expression for such a map is
\[
(u,x) \mapsto \left(3 u \pmod 1 , x + \frac{\cos(2 \pi u)}{32} x (1-x) \right).
\]
Various other constructions have been provided in the literature, for instance
\cite{MR2183303,MR2477416,MR2071239,MR3799757}.
Examples of the occurrence of attractors with intermingled basins in models
are in \cite{PhysRevE.85.036207,MR3999561,MR2102760}.

To address measure theoretical aspects of Kan's example, we need two more notions.
Recall first that for an ergodic invariant measure $\nu$ of $F$, the  basin of $\nu$ is defined by
$$
\rho(\nu) := \{x\in X \; ; \; \lim_{n\to \infty}\frac{1}{n}\sum_{i=0}^{n-1}\delta_{F^i(x)}\rightarrow\nu\},
$$
with convergence in the weak star topology and where $\delta_x$ stands for the atomic measure at $x$.
For the second notion, we need that $X$ is a smooth manifold and $F$ is a smooth map defined on it. By the classical Oseledec's theorem, if $\nu$ is an ergodic $F$-invariant measure then for $\nu$-a.e $x\in X$ and any vector $v$ belonging to the tangent space $T_x X$, the limits
$$\lim_{n\to\infty} \frac{1}{n} \log \|DF^n(x)v\|$$
exist. The measure $\nu$ is called {\it hyperbolic} if the above limits are all non-zero.

In Kan's example, the Lebesgue measures on the two boundary circles are ergodic invariant measures with positive horizontal and negative vertical Lyapunov exponents. The negative Lyapunov exponents guarantee that the  basins have positive volume. The supports of both measures have zero volume, as they are supported on the boundary circles. In particular, these two measures are singular with respect to the volume on $\mathbb{T}\times [0,1]$. Now, a question arises.

\medskip
\noindent{\bf Question.}
Is there a dynamical system with two attractors with hyperbolic ergodic absolutely continuous invariant measures whose basins are intermingled in some open set?

\medskip
There are obstacles for an example in the context of partially hyperbolic dynamics. For instance, in \cite{MR3742552} the authors prove that each ergodic invariant measure of a partially hyperbolic diffeomorphism on $\mathbb{T}^3$ with intermingled basins is supported on a two dimensional torus and so it can not be absolutely continuous. In this context, we also have the stable and unstable saturation of dynamics on the support of an absolutely continuous measure as a crucial obstacle (see \cite{MR2851905}). In contrast, using the Anosov-Katok method, the author in \cite{MR?} provides an example of a $C^\infty$ diffeomorphism with 
two intermingled basins but with one hyperbolic component (open and dense basin) and the other having zero exponent.

The above question is a motivation for us to consider a class of attractors, called {\em thick metric attractors},  that have the potential to be the support of absolutely continuous measures. A metric attractor $A$ is thick
if $0 < \mu (A) < 1$, so $A$ has positive but not full measure.
This phenomenon of a thick attractor
has been described by Ilyashenko in \cite{MR2644340} who gave constructions for skew product systems over shifts,
arising from iterated function systems,  and later in \cite{MR2853609} also for diffeomorphisms.

Here, we begin by presenting fundamental ideas for creating attractors with intermingled basins. One of the main yet simple idea is random walks along orbits of a given dynamical system.
Random walks along orbits appeared  in
 \cite{MR1675369,MR1782584,MR1766346,MR1784208} that looked at contexts of ergodic measure-preserving automorphisms.
One setting is that of a diffeomorphism $f$, where
random walks along orbits amounts to
\[
x_{n+1} := f^{\eta_n} (x_n)
\]
for steps $\eta_n = \pm 1$.
We will allow the distribution of the steps $\eta_n$ to be position dependent, that is, depending on $x_n$.
For a flow $x(t) = \varphi(t,x_0)$ of a differential equation we will look at
\[
x(t) = \varphi  (s_t , x_0)
\]
where $s_t$ is a random process on $\mathbb{R}$.

We rewrite such random systems as deterministic systems to give novel and elementary constructions of attractors with intermingled basins.
Then, we will use the large freedom the constructions allow for to provide examples of thick {metric} attractors with intermingled basins. Finally, we describe how this strategy can lead to the creation of multiple metric attractors with intermingled basins.

\section{Elementary examples of intermingled basins}

We present different but related constructions of dynamics with {metric} attractors and intermingled basins.
We start with definitions of spaces and maps, also in order to fix notation.
Random binary choices are modeled by shifts on sequences of symbols.
For this we denote
\begin{align}
\label{e:sigma2}
\Sigma_2^+ &:= \{0,1\}^\mathbb{N},
\end{align}
equipped with the product topology.
Elements of $\Sigma_2^+$ are written as $\omega = (\omega_i)_{i\in\mathbb{N}}$.
The left shift operator $\sigma: \Sigma_2^+ \to \Sigma_2^+$ is given by $(\sigma \omega)_i = \omega_{i+1}$.
We write $[a_0,\ldots,a_k]$ for the cylinder
\[
 [a_0,\ldots,a_k] := \{   \omega \in \Sigma_2^+ \; ; \; \omega_i = a_i, 0\le i \le k\}.
\]
The number $k+1$ of fixed symbols is the depth of the cylinder. For two cylinders $C :=  [a_0,\ldots,a_k]$ and $D :=  [b_0,\ldots,b_l]$ we write $C \, D :=  [a_0,\ldots,a_k,b_0,\ldots,b_l]$.
Bernoulli measure $\nu_p$ corresponding to a probability $p$ for the symbol $0$ and $1-p$ for the symbol $1$ is determined by defining it on cylinder sets as
\[
\nu_p \left([a_0,\ldots,a_k]\right) := p^{\# \{i \; ; \; 0\le i \le k, a_i = 0\}   }  (1-p)^{{\# \{i \; ; \; 0\le i \le k, a_i = 1\}   }}.
\]
For two-sided sequences we write $\Sigma_2 := \{0,1\}^\mathbb{Z}$, taking away the superscript "+".

Recall that two maps $F: X\righttoleftarrow $ and $G: Y\righttoleftarrow$ that preserve probability measures $\mu$ and $\nu$
are measurably isomorphic if there is a measure preserving bijection $H: X' \to Y'$ between
invariant subsets $X' \subset X$ and $Y' \subset Y$ of full measure  so that
$H \circ F = G \circ H$, see
\cite[Definition~2.7]{MR2723325}.
From the correspondence between binary expansions of numbers in the unit interval and symbol sequences in $\Sigma_2$, one gets that
the shift on $\Sigma_2^+$ with Bernoulli measure $\nu_p$ is
measurably isomorphic to the
piecewise linear map
$E_p: [0,1] \to [0,1]$ with Lebesgue measure, where $E_p$ is given by
\begin{align*}
E_p (u) &:= \left\{ \begin{array}{ll} \displaystyle \frac{u}{p}, & 0 \le u < p, \vspace{0.1cm}
                                   \\  \displaystyle \frac{u-p}{1-p}, & p \le u \le 1.
                   \end{array}    \right.
\end{align*}
Note that indeed $E_p$ leaves Lebesgue measure on $[0,1]$ invariant.
Likewise, the shift on $\Sigma_2$ endowed with Bernoulli measure $\nu_{p}$ is measurably isomorphic to the two-dimensional baker map $B_p$ on $[0,1)^2$ given by
\begin{align}\label{e:baker}
B_p(w,y) &:=   \begin{cases}
	\displaystyle \left(\frac{w}{p} ,  p y \right), & 0\le w < p, \\
	\\
	\displaystyle \left(\frac{w}{1-p} -\frac{p}{1-p}  , (1-p) y + p \right), &  p  \le w < 1,
\end{cases}
\end{align}
and endowed with Lebesgue measure.

We will use the single symbol $\lambda$ to denote Lebesgue measure on the interval $[0,1]$ or the circle $\mathbb{T}$. We also write $\lambda$ for Lebesgue measure on higher dimensional
intervals $[0,1]^d$ and tori $\mathbb{T}^d$.

\subsection{Random walks on orbits of diffeomorphisms: position dependent probabilities}
\label{s:pdp}

Consider a compact manifold $N$ and a diffeomorphism $f: N \righttoleftarrow$.
Let $p:N \to (0,1)$ be a continuous function.
For a given initial point $x_0 \in N$, take a random walk
\begin{align}
\label{e:rw}
x_{n+1} &:= f^{\eta_n} (x_n)
\end{align}
along the orbit of $x_0$, where $\eta_n = \pm 1$ for $n \in \mathbb{Z}$ is taken independently with probabilities $p(x_n)$ for the value $+1$ and $1-p(x_n)$ for the value $-1$.
One can view this as random walks in environments on $\mathbb{Z}$, where an environment stands for a choice of probabilities on  points in $\mathbb{Z}$ with which possible steps are taken \cite{MR0217872}. Perturbations of initial points can then give different environments.
We get $x_n = x_n(\eta)$ as a function of $\eta = (\eta_i)_{i\in\mathbb{Z}}$.

We will write the above random walk as a skew product system over a shift.
Instead of taking two symbols $0$ and $1$ as in \eqref{e:sigma2} we find it convenient to use symbols $-1$ and $+1$ and write
\begin{align*}
\Omega^+ &= \{-1,+1\}^\mathbb{N}.
\end{align*}
Other notation such as for the shift operator and cylinders will be as before.
Define the skew product map
$G : \Omega^+ \times N \righttoleftarrow$
by
\begin{align}\label{e:G}
G(\eta , x) &:= \left(\sigma \eta , f^{\eta_0} (x) \right).
\end{align}
For iterates we use notation
\begin{align}\label{Def:S}
G^n (\eta,x) &=  \left(\sigma^n \eta , f^{S_n (\eta)} (x) \right), \text{ with } S_n(\eta) := \sum_{i=0}^{n-1} \eta_i.
\end{align}

Write $p_{-1} (x) := 1 - p (x)$ and $p_{1} (x) := p(x)$.
Let $\zeta_x$ be the measure on $\Omega^+$ which is defined on cylinders by
\begin{align}
\label{e:nux}
\zeta_x ([a_0\cdots a_k]) &:=  \prod_{i=0}^k p_{a_i} (f^{S_i (a)} (x)).
\end{align}
In words,  $\zeta_x([a_0\cdots a_k])$ equals the probability for the random walk starting at $x$ to walk through $x, f^{a_0} (x), f^{a_0+a_1} (x),\ldots,f^{a_0 + \cdots + a_k}(x)$.
By the Hahn-Kolmogorov extension theorem, \eqref{e:nux} defines the measure $\zeta_x$ on the Borel $\sigma$-algebra on $\Omega^+$. For a  measure $m$ on $N$, define the measure $\mu_m$ on $\Omega^+ \times N$ by
\begin{align}\label{e:mum}
\mu_m (A) &:= \iint_{\Omega^+ \times N}  {\mathbbm{1}}_{A_x}  (\omega) \,d\zeta_x (\omega) dm (x),
\end{align}
where $A_x := A \medcap (\Omega^+ \times \{x\})$.
As $p$ is continuous we find that $\zeta_x$ depends continuously on $x$ in the weak star topology.

A Borel measure $m$ on $N$ is stationary if
\begin{align}\label{e:mstat}
m(I)
&=  \int_{f^{-1} (I)} p_1 (x) \, dm (x)  + \int_{f (I)} p_{-1} (x) \, dm (x).
\end{align}
A theory of deterministic representations for
the random system \eqref{e:rw} is developed in \cite{MR2429852}, relating stationary measures to invariant measures of a certain deterministic system defined on $[0,1]\times N$. Our approach centers around the skew product system $G$ and has the following correspondence result
between stationary measures and invariant measures for $G$. Although formulated for the specific setup of iterated function systems generated by $f$ and $f^{-1}$, it is not restricted to this setup and works
more generally for iterated function systems with position dependent probabilities.
Although stationary measures do not play a vital role in the constructions in this paper, the following result is included as it clarifies the setup.

\begin{proposition}
A probability measure $m$ is stationary for the Markov process if and only if
 $\mu_m$ is invariant for $G$.
\end{proposition}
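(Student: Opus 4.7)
The plan is to verify the invariance $\mu_m(G^{-1}A) = \mu_m(A)$ on a generating algebra. Sets of the form $C \times I$, where $C = [a_0,\ldots,a_k]$ is a cylinder in $\Omega^+$ and $I \subset N$ is a Borel set, generate the product Borel $\sigma$-algebra, so it suffices to establish the identity on these.

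The computational core is the recursion
\[
\zeta_x\bigl([\epsilon,a_0,\ldots,a_k]\bigr) = p_\epsilon(x)\,\zeta_{f^\epsilon(x)}\bigl([a_0,\ldots,a_k]\bigr), \qquad \epsilon \in \{-1,+1\},
\]
which follows directly from the product formula \eqref{e:nux} and the observation that for $b = (\epsilon, a_0, \ldots, a_k)$ one has $S_{i+1}(b) = \epsilon + S_i(a)$, so $f^{S_{i+1}(b)}(x) = f^{S_i(a)}(f^\epsilon(x))$. From the definition \eqref{e:mum} one then reads off
\[
\mu_m(C \times I) = \int_I \zeta_x(C)\,dm(x),
\]
while decomposing $G^{-1}(C \times I)$ according to the value of $\eta_0 \in \{-1,+1\}$ and applying the recursion gives
\[
\mu_m\bigl(G^{-1}(C \times I)\bigr) = \int_{f^{-1}(I)} p_1(x)\,\zeta_{f(x)}(C)\,dm(x) + \int_{f(I)} p_{-1}(x)\,\zeta_{f^{-1}(x)}(C)\,dm(x).
\]

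Setting $\phi(x) := \zeta_x(C)\,\mathbbm{1}_I(x)$, the right-hand side rewrites as $\int [p_1(x)\phi(f(x)) + p_{-1}(x)\phi(f^{-1}(x))]\,dm(x)$, so invariance on $C \times I$ is equivalent to
\[
\int \phi\,dm = \int \bigl[p_1\,\phi\circ f + p_{-1}\,\phi\circ f^{-1}\bigr]\,dm.
\]
This is precisely the integrated form of \eqref{e:mstat}, which extends from $\phi = \mathbbm{1}_I$ to all bounded Borel $\phi$ by linearity and monotone convergence; measurability of $x \mapsto \zeta_x(C)$ is ensured by the continuity of $p$. Hence stationarity of $m$ implies invariance of $\mu_m$ on every generator $C \times I$, and therefore on the full Borel $\sigma$-algebra.

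For the converse, specialise $C = \Omega^+$ (the empty cylinder), so $\zeta_x(C) \equiv 1$ and $\phi = \mathbbm{1}_I$; invariance of $\mu_m$ then yields \eqref{e:mstat} directly. The main obstacle is just the indexing in the recursion for $\zeta_x$ under prepending a symbol; once that identity is secured, the equivalence drops out of a standard generator argument together with the fact that stationarity, stated for indicators, upgrades to all bounded observables.
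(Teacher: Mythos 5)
Your proof is correct and follows essentially the same route as the paper's: the same recursion $\zeta_x([\epsilon]\,C) = p_\epsilon(x)\,\zeta_{f^\epsilon(x)}(C)$, the same two-term decomposition of $\mu_m(G^{-1}(C\times I))$, the same substitution $\phi(x) = \zeta_x(C)\mathbbm{1}_I(x)$ into the integrated stationarity identity for the forward direction, and the same specialization $C = \Omega^+$ for the converse. The only cosmetic difference is that you phrase both directions around a single central identity for $\phi$, whereas the paper treats the two directions in sequence.
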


\begin{proof}
We first prove that if $\mu_m$ is invariant for $G$, then $m$ is stationary.
Consider a product set $A = C \times I$ of a cylinder $C$ and a Borel set $I$.
By definition of the measures $\zeta_x$ we have
\begin{align}
\nonumber
\zeta_{x} (-1\, C) &=  p_{-1} (x) \zeta_{f^{-1}(x)} ( C  ),
\\
\label{e:defnux}
 \zeta_{x} (1\, C) &= p_1 (x)  \zeta_{f(x)} ( C  ).
\end{align}
Calculate
\begin{align*}
\mu_m (G^{-1} (A))
&= \int_{f^{-1} (I)}  \zeta_{x} (1\, C) \, dm (x)  + \int_{f (I)} \zeta_{x} (-1\, C) \, dm (x)
\\
&=
\int_{f^{-1} (I)} p_1 (x)  \zeta_{f(x)} ( C  )  \, dm (x)  + \int_{f (I)} p_{-1} (x) \zeta_{f^{-1}(x)} ( C  ) \, dm (x)
\end{align*}
and note
\begin{align*}
\mu_m (A)
&= \int_I \zeta_x (C) \, dm (x).
\end{align*}
By invariance of $\mu_m$, these expressions are equal. Apply this to $C = \Omega^+$.
Then
\begin{align*}
m(I)
&= \int_{f^{-1} (I)} p_1 (x)    \, dm (x)  + \int_{f (I)} p_{-1} (x)  \, dm (x)
\end{align*}
which means that $m$ is stationary.

For the other direction, let $A := C \times I$ be a product set in $\Omega^+ \times N$ as above and suppose $m$ is a stationary probability measure.
Write \eqref{e:mstat} as
\begin{align*}
\int   \mathbbm{1}_I (x)\, dm (x)
&=  \int  \mathbbm{1}_I (f (x) ) p_1 (x) \, dm (x)  + \int \mathbbm{1}_I (f^{-1}(x)) p_{-1} (x) \, dm (x)
\end{align*}
Approximating an arbitrary integrable function $\psi$ by step functions we get from this
\begin{align}\label{e:psi}
\int   \psi (x)\, dm (x)
&=  \int  \psi (f (x) ) p_1 (x) \, dm (x)  + \int \psi (f^{-1}(x)) p_{-1} (x) \, dm (x).
\end{align}

Using \eqref{e:defnux} calculate
\begin{align*}
\mu_m (G^{-1} (A))
&=  \iint \mathbbm{1}_{1\, C} (\omega)  \mathbbm{1}_I (f (x) )  \, d\zeta_x (\omega) dm (x)  + \iint  \mathbbm{1}_{-1\, C} (\omega) \mathbbm{1}_I (f^{-1}(x))  \, d\zeta_x (\omega)  dm (x)
\\
&= \int  \zeta_x (1\, C) \mathbbm{1}_I (f (x) )  \,  dm (x)
+\int  \zeta_x (-1\, C)  \mathbbm{1}_I (f^{-1}(x))  \,   dm (x)
\\
&= \int  p_1(x) \zeta_{f(x)} (C)    \mathbbm{1}_I (f (x) )  \,  dm (x)
+\int  p_{-1} (x) \zeta_{f^{-1}(x)} (C)  \mathbbm{1}_I (f^{-1}(x))  \,   dm (x).
\end{align*}
Choosing  $\psi (x) := \zeta_x (C) \mathbbm{1}_I (x)$, we get from \eqref{e:psi}
that the last line equals
\[ \int \zeta_{x} (C)    \mathbbm{1}_I (x )  \,  dm (x), \]
which is $\mu_m(A)$.
So
\[
 \mu_m (G^{-1}(A))  = \mu_m (A),
\]
which proves invariance of $\mu_m$ since these product sets generate the Borel $\sigma$-algebra.
\end{proof}

\subsubsection{North pole/south pole diffeomorphisms on the circle}

We specialize the above construction to a north pole/south pole diffeomorphism
$f$ on a circle $\mathbb{S}^{1}$. This gives a prototype example of {metric} attractors with intermingled basins.
 The north pole $p_N$ is a repelling fixed point for $f$  and the south pole $p_S$ an attracting fixed point. The basin of attraction of $p_S$ is $\mathbb{S}^{1} \setminus \{p_N\}$.

Assume the function $p$ satisfies
\begin{align}
\label{e:p}
p (p_S) > 1/2, \qquad p(p_N) < 1/2.
\end{align}
As before, denote Lebesgue measure on the circle by $\lambda$. The measure $\mu_\lambda$ on $\Omega^+ \times \mathbb{S}^1$
is then defined  as in \eqref{e:mum}.

\begin{theorem}\label{t:northsouthintermingled}
Let the continuous function $p:\mathbb{S}^1 \to (0,1)$ satisfy \eqref{e:p} and
take $\Omega^+ \times \mathbb{S}^1$ endowed by the reference measure $\mu_\lambda$.
Consider the dynamical system on $\Omega^+ \times \mathbb{S}^1$ given by \eqref{e:G}.
Then $\Omega^+ \times \{p_N\}$ and $\Omega^+ \times \{p_S\}$ are {metric} attractors with intermingled basins.
The union $\Omega^+ \times \{p_N,p_S\}$ is the likely limit set.
\end{theorem}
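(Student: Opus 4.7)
The plan is to recast the theorem as a statement about the random walk $n\mapsto S_n(\eta)$. Fix $x\in\mathbb{S}^1\setminus\{p_N,p_S\}$. Because $f$ is north pole/south pole, $f^k(x)\to p_S$ as $k\to+\infty$ and $f^k(x)\to p_N$ as $k\to-\infty$, so the $\mathbb{S}^1$-component of $G^n(\eta,x)=(\sigma^n\eta,f^{S_n(\eta)}(x))$ converges to $p_S$ if and only if $S_n(\eta)\to+\infty$, and to $p_N$ if and only if $S_n(\eta)\to-\infty$. The theorem will follow once one proves (i) that, under $\zeta_x$, $S_n$ diverges to $+\infty$ or $-\infty$ almost surely; (ii) that both divergences occur with strictly positive $\zeta_x$-probabilities $\pi_+(x),\pi_-(x)$ for every such $x$; and (iii) that for $\mu_\lambda$-a.e.\ $(\eta,x)$ in either basin the orbit $\{\sigma^n\eta\}$ is dense in $\Omega^+$.

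Under $\zeta_x$ the sequence $S_n$ is a nearest-neighbour Markov chain on $\mathbb{Z}$ whose step from site $k$ is $+1$ with probability $p_k:=p(f^k(x))$ and $-1$ with probability $1-p_k$. Continuity of $p$ together with the hypothesis $p(p_N)<1/2<p(p_S)$ furnishes $M\in\mathbb{N}$ and $\epsilon>0$ with $p_k>1/2+\epsilon$ for $k>M$ and $p_k<1/2-\epsilon$ for $k<-M$. Standard gambler's-ruin estimates for the biased walks on the half-lines $\{k>M\}$ and $\{k<-M\}$ show that, once $S_n$ leaves $[-M,M]$, it has positive probability never to return; an excursion decomposition then yields (i). For (ii), the elementary lower bound $\zeta_x([+1,\ldots,+1])\ge\delta^{M+1}>0$, and its twin with $-1$'s, combined with the non-return probabilities from $\pm(M+1)$, gives $\pi_\pm(x)>0$ for every $x\notin\{p_N,p_S\}$.

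Positive measure of the basins is then immediate by Fubini: $\mu_\lambda(\rho(\Omega^+\times\{p_S\}))=\int_{\mathbb{S}^1}\pi_+(x)\,d\lambda(x)>0$, and similarly for $p_N$. Intermingling is tested on the basis of open sets $U=C\times V$ with $C=[a_0,\ldots,a_k]$ a cylinder and $V\subset\mathbb{S}^1$ open. The product form of $\zeta_x$ in \eqref{e:nux} yields the Markov identity
\[
\zeta_x\bigl(\{\eta\in C : S_n(\eta)\to\pm\infty\}\bigr)=\zeta_x(C)\cdot\pi_\pm\bigl(f^{a_0+\cdots+a_k}(x)\bigr),
\]
and both factors are strictly positive (the probabilities $p_{a_i}$ lie in $(0,1)$, and $\pi_\pm$ vanishes only at the two poles), so integrating over $V$ produces positive $\mu_\lambda$-measure for $U$ intersected with either basin.

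It remains to establish (iii), which delivers minimality (condition (2) of the metric-attractor definition) and identifies the likely limit set. On the event $S_n(\eta)\to+\infty$ the local transition probabilities $p_{S_n(\eta)+1}$ tend to $p(p_S)\in(0,1)$, hence stay uniformly bounded away from $0$ and $1$ along the forward tail. A conditional Borel--Cantelli argument then shows that for every cylinder $D\subset\Omega^+$ one has $\sigma^n\eta\in D$ infinitely often almost surely, so $\{\sigma^n\eta\}$ is dense in $\Omega^+$ and $\omega(\eta,x)=\Omega^+\times\{p_S\}$ for $\mu_\lambda$-typical such $(\eta,x)$. Removing any nonempty relatively open subset of $\Omega^+\times\{p_S\}$ therefore eliminates all but a null set of basin points, giving minimality; the symmetric argument handles $p_N$, and combined with the dichotomy in (i) pins down $\Omega^+\times\{p_N,p_S\}$ as the likely limit set. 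The two technical cores are the transience dichotomy for the inhomogeneous birth/death chain and the conditional Borel--Cantelli step; the main obstacle I anticipate is making the latter watertight, since the symbolic factor is only asymptotically, not uniformly, close to a Bernoulli product and the non-uniformity of the transition probabilities must be handled with care.
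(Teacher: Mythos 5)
Your argument is correct and follows essentially the same route as the paper: identify the $\mathbb{S}^1$-coordinate of an orbit with position in $\mathbb{Z}$, so that the dynamics becomes a nearest-neighbour random walk in the environment determined by the orbit of $x$; use the bias conditions at the poles to obtain the transience dichotomy with escape probabilities $\pi_{\pm}(x)\in(0,1)$; then deduce positive basin measure via Fubini and intermingling via the product/Markov structure of $\zeta_x$ on cylinders. Where the paper simply cites the random-walk-in-environments literature and moves on, you unwind the gambler's-ruin estimate and the excursion decomposition explicitly, and you also spell out the identity $\zeta_x(\{\eta\in C:S_n\to\pm\infty\})=\zeta_x(C)\,\pi_\pm(f^{a_0+\cdots+a_k}(x))$, which is a clean substitute for the paper's observation that $G^k(C\times I)=\Omega^+\times I_k$; the two are equivalent after a change of variable.

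The one place you go genuinely beyond the paper's written proof is the minimality of the attractors and the identification of the likely limit set, which you derive from density of the symbolic orbit $\{\sigma^n\eta\}$; the paper is terse here. Your instinct is right, but the obstacle you flag at the end is not actually there. First, since $p$ is continuous on the compact circle with values in the open interval $(0,1)$, there is a single $\delta>0$ with $p(\cdot)\in[\delta,1-\delta]$ everywhere, not just along the forward tail; so for every cylinder $D$ of depth $k+1$ one has $\zeta_x(\sigma^{-n}D\mid\mathcal{F}_n)\ge\delta^{k+1}$ uniformly, and L\'evy's extension of the second Borel--Cantelli lemma (applied along an arithmetic progression of $n$'s to decouple overlapping blocks) gives $\sigma^n\eta\in D$ infinitely often $\zeta_x$-a.s., unconditionally. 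Second, no conditional version is needed at all: an event of full $\zeta_x$-measure remains of full conditional measure given any positive-probability event such as $\{S_n\to+\infty\}$, so the passage to the basin is automatic. With these two observations your minimality and likely-limit-set steps become routine, and the proof is complete.
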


\begin{proof}
Consider an orbit $x_{n+1} := f(x_n)$.
Identifying $x_n$ with $n$, the random walk \eqref{e:rw} gives a random walk on $\mathbb{Z}$.
The statements follow from the theory of random walks in environments, see  \cite[Section~I.12]{MR0217872} and \cite[Section~2.2]{MR4035729}.
For every $x \in \mathbb{S}^1 \setminus \{p_S,p_N\}$, there is a $P(x) \in (0,1)$ so that the probability of converging
to $p_S$ equals $P(x)$ and the probability of converging to $p_N$ is $1-P(x)$.
This means that
\begin{align*}
\zeta_x \left( \left\{  \eta \in \Omega^+ \; ; \; \lim_{n\to \infty} f^{S_n(\eta)} (x) = p_S   \right\} \right) &= P(x),
\\
\zeta_x \left( \left\{  \eta \in \Omega^+ \; ; \; \lim_{n\to \infty} f^{S_n(\eta)} (x) = p_N   \right\} \right) &= 1 - P(x),
\end{align*}
where, $S$ is defined as in (\ref{Def:S}). So  the basins of attraction $\rho \left(\Omega^+ \times \{p_S\}\right)$ and $\rho \left(\Omega^+ \times \{p_N\}\right)$ satisfy
\begin{align*}
\zeta_x \left( \rho \left(\Omega^+ \times \{p_S\}\right) \medcap \left(\Omega^+ \times \{x\}\right) \right) &= P (x),
\\
  \zeta_x \left( \rho \left(\Omega^+ \times \{p_N\}\right) \medcap \left(\Omega^+ \times \{x\}\right) \right) &= 1-P (x).
\end{align*}

Let $U$ be an open set in $\Omega^+ \times \mathbb{S}^1$. Take a product set $C \times I \subset U$ of a cylinder $C$ and an open interval $I \subset S^1$. If $C$ has depth $k$, then
the iterate $G^k ( C \times I)$ will be of the form $\Omega^+ \times I_k$ for an open interval $I_k$.
This gives
\begin{align*}
\mu_\lambda ( \rho \left(\Omega^+ \times \{p_S\}\right) \medcap \Omega^+ \times I_k  )  &= \int_{I_k} P(x)  \, d\lambda (x) >  0,
\\
 \mu_\lambda ( \rho \left(\Omega^+ \times \{p_N\}\right) \medcap \Omega^+ \times I_k) &= \int_{I_k} 1 - P(x) \, d\lambda(x) (x)> 0.
\end{align*}
So also both $ \mu_\lambda ( U \medcap \rho \left(\Omega^+ \times \{p_S\}\right))  > 0$
and
 $\mu_\lambda ( U \medcap \rho \left(\Omega^+ \times \{p_N\}\right)) > 0$.
This proves the statements.
\end{proof}


\subsection{Skew product systems}
\label{s:circle}

We continue with elementary constructions of intermingled basins somewhat in the spirit of Kan's example \cite{MR1254075}, but using random walks on orbits of flows on compact manifolds.

Consider the flow of a Morse-Smale gradient differential equation
\begin{align}
\label{e:odef-p}
\dot{x} &= g(x)
\end{align}
on a compact manifold $N$.
We have
\begin{align}
\label{e:odegradh}
g &=  -\text{grad}\, (h)
\end{align}
with a height function $h : N \to [0,1]$. See for instance \cite{MR0669541} for background.
We assume that \eqref{e:odef-p} has a unique attracting equilibrium $p_S$ and a unique repelling equilibrium $p_N$. Such flows exist on any compact manifold.  We can assume $h(p_S) = 1$ and $h (p_N)=0$.
Note that flows that
provide north pole/south pole diffeomorphisms on the circle or on a sphere are possible examples.
Write $\varphi_t: N \to N$ for the flow of \eqref{e:odef-p}. We will also write $\varphi(t,x) := \varphi_t (x)$.

Let $E: \mathbb{T} \to \mathbb{T}$ (here $\mathbb{T} := \mathbb{R}/\mathbb{Z}$) be an expanding map $E (u) := L u \pmod 1$, for some integer
$L > 2$.
Let $s : \mathbb{T} \times N \to \mathbb{R}$ be a smooth scalar function satisfying  the following properties.

\begin{enumerate}

\item There are fixed points $q_1, q_2$ for $E$ so that for all $x \in N$,
\label{i:disc:0}
\[ s(q_1,x) <0, \qquad s(q_2,x)>0.
\]

\item We have
\[
\int_{\mathbb{T}}  s(u,p_N) \, du  < 0,
\qquad
\int_{\mathbb{T}}  s(u,p_S) \, du  > 0.
\]
 \label{i:disc:3}

\end{enumerate}
By compactness of $N$ there is $C>0$ so that for all $x \in N$,  $s(q_1,x) < -C < 0$ and  $s(q_2,x) > C > 0$.
An example of a system with the above conditions is obtained by taking a smooth scalar function $\eta: \mathbb{T} \to \mathbb{R}$ satisfying
\[
\int_{\mathbb{T}}\eta (u) \, du = 0, \qquad  \eta (q_1) < 0, \qquad \eta(q_2) >0,
\]
 and then letting
 \[
s (u,x) := \eta(u) + \delta  \left(h(x) - \frac{1}{2}\right)
\]
for some small $\delta>0$.

Consider the skew product system
$F:  \mathbb{T} \times N \righttoleftarrow$
given by
\begin{align}\label{e:F}
F(u,x) &:= \left( E (u) ,  \varphi_{s(u,x)}  (x)      \right).
\end{align}
Write this as
\[
F(u,x) =  \left( E (u) ,  f_u  (x)      \right),
\]
and denote iterates as
\[
F^n(u,x) =  \left( E^n (u) ,  f^n_u  (x)      \right).
\]
Note $f^n_u(x) = \varphi_{s_n(u,x)} (x)$ with
\[
s_n(u,x) := \sum_{i=0}^{n-1}  s(E^i (u) , f^i_u(x) ).
\]

The top fiber Lyapunov exponent $L_{p_S} := \lim_{n\to \infty} \frac{1}{n} \ln \left( \left\| Df^n_u (p_S)\right\| \right)$ at $p_S$
exists for Lebesgue almost all $u\in\mathbb{T}$.
Applying Birkhoff's ergodic theorem, one gets
\[
\lim_{n\to \infty} \frac{1}{n} \sum_{i=0}^{n-1} s  (E^i(u),p_S) = \int_\mathbb{T}   s (u , p_S)\, du > 0.
\]
Using this in the linearized flow
$v\mapsto D \varphi_{s(u,p_S)} (p_S) v = e^{ Dg(p_S) s(u,p_S)  }v $ shows
\[
L_{p_S} =  \lim_{n\to \infty} \frac{1}{n} \ln \left( \left\| e^{Dg (p_S) n} \right\| \right)  \int_{\mathbb{T}}  s (u , p_S)\, du < 0.
\]
Here $\lim_{n\to \infty} \frac{1}{n} \ln \left( \left\| e^{Dg (p_S) n} \right\| \right) $ is the top Lyapunov exponent of the flow $v\mapsto D \varphi_{t} (p_S) v = e^{ Dg(p_S) t  }v$.
Likewise we have a negative top fiber Lyapunov exponent at $p_N$.

\begin{theorem}\label{t:ifsintermingledT}
Consider the skew product system $F$ from \eqref{e:F} on  $\mathbb{T} \times N$. Take volume  on $\mathbb{T} \times N$ as reference measure.
The sets $\mathbb{T} \times \{p_N\}$ and $\mathbb{T} \times \{ p_S\}$ are minimal {metric} attractors
with intermingled basins. The union $\mathbb{T} \times \{p_N,p_S\}$ is the likely limit set.
\end{theorem}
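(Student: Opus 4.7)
The plan is to mirror the proof of Theorem~\ref{t:northsouthintermingled} by regarding $F$ as a random walk along flow orbits of $\varphi_t$, driven this time by the expanding map $E$ rather than by an explicit shift. Since Lebesgue measure on $(\mathbb{T},E)$ is measurably isomorphic, via base-$L$ expansion, to a Bernoulli measure on $\{0,\ldots,L-1\}^\mathbb{N}$, the scalar $s(E^i(u),f^i_u(x))$ plays the role of a position-dependent step size and the cumulative sum $s_n(u,x)$ is the random walk whose sign and growth determine whether the fiber iterate $\varphi(s_n(u,x),x)$ approaches $p_S$ or $p_N$. I would then split the argument into three steps.

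First, I would show positive-measure basins. Ergodicity of $(E,\lambda)$ and Birkhoff's theorem give $\lim_{n\to\infty} s_n(u,p_S)/n = \int_\mathbb{T} s(u,p_S)\,du > 0$ for $\lambda$-a.e.\ $u$, so the fiber cocycle $Df^n_u(p_S)=\exp(Dg(p_S)\,s_n(u,p_S))$ contracts exponentially, and a standard random stable manifold (Pesin block) argument provides a Lebesgue-positive set $B\subset\mathbb{T}$ and an $r>0$ such that every fiber ball of radius $r$ around $p_S$ with $u\in B$ lies in $\rho(\mathbb{T}\times\{p_S\})$. Fubini yields positive volume; the mirror statement at $p_N$ is analogous. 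For intermingling, given an open $U$, I would fix a product $J\times V\subset U$ with $J$ an $L$-adic cylinder. Using the first hypothesis on $s$ and compactness, choose neighborhoods $W_1\ni q_1$, $W_2\ni q_2$ on which $s<-C/2$ respectively $s>C/2$ for all $x\in N$. Since $q_2$ is a fixed point of the expanding $E$, $J$ contains a sub-cylinder $J'$ whose $E$-orbit first enters $W_2$ and then stays there for an arbitrarily prescribed number $m$ of iterates. On $J'\times V$ the partial sum $s_n(u,x)$ then exceeds $mC/2$, so $\varphi_{s_n(u,x)}$ carries $V$ into any desired small neighborhood of $p_S$; choosing this neighborhood inside the Pesin basin from the first step yields $(\lambda\times\mathrm{vol})\bigl(U\cap\rho(\mathbb{T}\times\{p_S\})\bigr)>0$. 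Using $W_1$ instead gives the symmetric conclusion for $p_N$.

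Finally, minimality of each attractor follows because the projection of any basin orbit to $\mathbb{T}$ is an $E$-orbit and is $\lambda$-a.s.\ equidistributed, so its $\omega$-limit set contains the full circle fiber. For the likely-limit-set claim, I would show that $|s_n(u,x)|\to\infty$ for a.e.\ $(u,x)$: otherwise the fiber iterate would accumulate on a $\varphi_t$-invariant subset of $N\setminus\{p_S,p_N\}$, which is empty for this Morse--Smale gradient flow. The main obstacle I anticipate is this last divergence dichotomy, since unlike in Theorem~\ref{t:northsouthintermingled} the randomness here is generated by a deterministic driver, so the random-walk-in-random-environment statements invoked there must be re-established through the ergodic theory of the skew product $F$; secondarily, the measurability and uniformity of the Pesin block in the first step will require care because the fiber cocycle is a concrete exponential tied to $Dg$ rather than a general smooth perturbation.
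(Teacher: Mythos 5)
Your first three steps track the paper closely: positive-measure basins via the negative fiber Lyapunov exponent at $p_S$ (and $p_N$), local stable manifolds over a.e.\ base point, and Fubini; intermingling by pushing a product set forward under the expanding base until it meets the fixed points $q_1,q_2$ where $s$ has a definite sign, and then following the flow into a stable manifold; and minimality via a.e.\ equidistribution of $E$-orbits. These are all in line with (or reasonable completions of) the paper's argument.

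The gap is precisely in the likely-limit-set step, which you have correctly flagged as the hard part, but the sketch you give does not close it. First, the dichotomy you propose, $|s_n(u,x)|\to\infty$ versus bounded, is not exhaustive in the way needed: what matters is whether $s_n(u,x)\to+\infty$ or $s_n(u,x)\to-\infty$. In the oscillating regime with $\limsup_n s_n(u,x)=+\infty$ and $\liminf_n s_n(u,x)=-\infty$ one has $|s_n|$ unbounded, yet the fiber iterate $\varphi_{s_n(u,x)}(x)$ sweeps back and forth through a compact middle portion of the $\varphi$-orbit of $x$ infinitely often, so the $\omega$-limit set is \emph{not} contained in $\mathbb{T}\times\{p_N,p_S\}$. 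Second, and more fundamentally, your proposed contradiction (``otherwise the fiber iterate would accumulate on a $\varphi_t$-invariant subset of $N\setminus\{p_S,p_N\}$'') does not hold: the $\omega$-limit set of an $F$-orbit is $F$-invariant, not $\varphi_t$-invariant, and its projection to $N$ is merely a closed subset of the orbit closure of $x$; it need not be invariant under the flow, so there is no conflict with the Morse--Smale structure. The paper handles this step by a completely different, global argument: suppose the set $\Lambda$ of points whose $\omega$-limit set escapes $\mathbb{T}\times\{p_N,p_S\}$ has positive volume, take a Lebesgue density point in some slice $\mathbb{T}\times\{q\}$ (Fubini), and use that $E$ is piecewise linear so the proportion of $\Lambda$ in a small base interval $J$ is preserved under iteration, while a uniform constant $c>0$ of the image $F^{n+m}(J)$ lands in a stable manifold (with $m,c$ depending only on the compact part of the orbit of $q$, not on $J$ or $n$); shrinking $J$ forces the proportion of $\Lambda$ in $F^{n+m}(J)$ past $1-c$, a contradiction. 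If you wish to argue instead via the cocycle $s_n$, you would need to show directly that the oscillating regime has measure zero, which is essentially as hard as the density argument the paper uses and which your sketch does not address.
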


\begin{proof}
For $\delta$ positive, write $B_\delta (p_S)$ for the $\delta$-neighborhood of $p_S$.
For $\delta$ sufficiently small, say $\delta \le \delta_0$, $B_\delta (p_S)$ will be a ball.
Let $\lambda<0$ be such that the spectrum of $Dg (p_S)$ is contained in $\{ z \in \mathbb{C} \; ; \;  \text{Re}\, (z) < \lambda \}$.
Standard estimates on solutions of differential equations near equilibria show that for orbits $f^n_u(x)$ that stay inside $B_\delta (p_S)$, so
$f^i_u(x) \in  B_\delta (p_S)$ for $0\le i \le n$, we have a bound
\begin{align*}
d(f^n_u (x), p_S) &=
d \left( \varphi_{ s\left(E^n(u) , f^{n-1}_{E^n(u)} (x)\right) } \left(f^{n-1}_{E^n(u)} (x) \right)   \circ \cdots \circ \varphi_{s(u,x)}  (x) ,p_S\right)
\\
&\le C e^{\lambda \,s_n \left(u,p_S\right)} d\left(x,p_S\right)
\end{align*}
for some $C>0$.
The function
\[
r (u) =  \sup \left\{  \delta   \; ; \;  \delta \le \delta_0,  f^n_u (x_0) \in B_{\delta_0} (p_S) \text{ for } n \ge 0,
 \lim_{n\to\infty} f^n_u (x_0) = p_S \right\}
\]
 is therefore positive
for Lebesgue almost all $u$.
Compare the exposition of Pesin theory in \cite{MR4574839}, and the direct estimates as in  \cite[Lemma~3.1]{MR3600645},
\cite[Lemma~2.2]{MR1254075} or \cite[Lemma~A.1]{MR2477416}.
We thus find local stable manifolds $\{u\} \times  U_u \subset \{u\} \times N$ of $(u,p_S)$, for Lebesgue almost all $u$. More precisely, $x_0 \in U_u$
means that $F^n (u,x_0) \in \mathbb{T}\times B_{\delta_0} (p_S)$, for all $n \ge 0$ and hence the $\omega$-limit set
of $( u,x_0)$ is contained in  $\mathbb{T}\times \{p_S\}$.
Likewise there are local stable manifolds $\{u\} \times  V_u$ of $(u,p_N)$, for
which $(u,x_0)$, $x_0 \in V_u$, has its $\omega$-limit set in $\mathbb{T} \times \{p_N\}$.

Take an open set $U \subset \mathbb{T} \times N$. Because $E$ is an expanding map on $\mathbb{T}$, there is an iterate $F^j (U)$ so that the projection to the first coordinate in $\mathbb{T}$ of  $F^j (U)$ is surjective.
In particular $F^j (U)$ intersects the fiber $\{q_2\} \times N$ with $q_2$ as in property \eqref{i:disc:0}.
Using $s(q_2,x) > 0$, for all $x\in N$, shows that
there are iterates $F^n (U)$ that intersect the union
\[
 W^s (\mathbb{T}\times \{p_S\}) :=   \bigcup_{u \in \mathbb{T}} \{\omega\} \times U_u
\]
 of local stable manifolds of $(u,p_S)$ in a set of positive measure. Likewise there  are iterates $F^n (U)$ that intersect the union
\[
 W^s (\mathbb{T}\times \{p_N\}) :=
\bigcup_{u \in \mathbb{T}} \{\omega\} \times V_u
\]
of local stable manifolds of $(u,p_N)$ in a set of positive measure. This means that the basins of attraction are intermingled.

To prove that $\mathbb{T} \times \{ p_N,p_S\}$ is the likely limit set,
suppose it is not and consider the set $\Lambda$ of positive  measure of points whose $\omega$-limit
set is not contained in $\mathbb{T} \times \{ p_N,p_S\}$. By Fubini there is a
set $\mathbb{T} \times \{q\}$ that intersects $\Lambda$ in a set of positive measure.
Take a Lebesgue density point $(v,q)$ of $\Lambda \medcap \left(\mathbb{T} \times \{q\}\right)$.
We may take $q$ inside both the basin of $p_S$ and the unstable set of $p_N$ for $\varphi_t$.
Consider a small interval $J$ around $(v,q)$ in $\mathbb{T} \times \{q\}$.
There exists $n$ depending on $J$ so that $E^n (J)$ covers $\mathbb{T}$.

Note that $f^n_u (q)$, $u \in J$, are contained in the orbit of $q$.
Removing $\delta_0$-balls around $p_S$ and $p_N$, a compact part of the orbit of $q$ remains.
There is therefore a constant $c>0$ and a positive integer $m$ so that
$F^{n+m} (J)$ intersects either $W^s (\mathbb{T}\times \{p_N\})$
or $W^s (\mathbb{T}\times \{p_S\})$ in a set of measure at least $c$. Here $m$ and $c$ do not depend on $n$.
Since $E$ is a piecewise linear map, the proportion $\Lambda \medcap J$ in $J$
remains unchanged under iteration. Shrinking $J$ and  increasing $n$ makes that
$F^{n+m} (J) \medcap \Lambda$ goes to one.
In particular $F^{n+m} (J) \medcap \Lambda > 1-c$ for $n$ large,
which gives a contradiction.
\end{proof}

One can replace the base map $E$  by an invertible map such as a hyperbolic torus automorphism. We leave this to the reader, compare the discussion of Kan's example in \cite{MR2105774}.

\subsubsection{Multiple {metric} attractors} \label{subsub:mma}

We continue with a construction, in the vein of the setting of Theorem~\ref{t:ifsintermingledT}, of skew product systems with multiple {metric} attractors and mutually intermingled basins of attraction.
In the following $k\ge 4$ will be a positive even integer.
Let $\varphi_t$ be the flow of gradient Morse-Smale  vector field on the torus $\mathbb{T}^2$, with a sink $s_1$ with open and dense basin, and further equilibria $s_2,\ldots,s_k$ that are saddles or sources. It is easy to see that such vector fields exist, for any even $k\ge 4$.
For each integer $j$, $2 \le j \le k$, let $h_j : \mathbb{T}^2\to\mathbb{T}^2$  be a smooth
diffeomorphism that permutes equilibria by
\[
h_j (s_i) := s_{i + j - 1 \mod k+1}.
\]
For notational convenience, take $h_1$ to be the identity map.
Write $\psi_j: \mathbb{T}^2 \to \mathbb{T}^2$ for maps $h_j \circ \varphi_1 \circ h_j^{-1}$.
The maps $\psi_j$, $1 \le j \le k$ will then be conjugate to each other by a smooth conjugacy.
We find that $s_i$ is the unique attracting fixed point for $\psi_{i}$ with open and dense basin
$W^s (s_i)$.
Note also that the spectrum of $D\psi_i (s_{i+j \mod k+1})$ is equal to the spectrum of $D\psi_1 (s_j)$.

As before, let $E$ be an expanding map defined on $\mathbb{T}$ by $E (x) :=  L x \pmod 1$, where $L>1$ is a large enough natural number ensuring that $E$ has $k$ fixed points $q_1,\ldots, q_k$.
Let $J_1,\ldots, J_k$ be disjoint open intervals in $\mathbb{T}$, with $q_i \in J_i$. Let $t: \mathbb{T}\to [0,1]$ be a smooth function that is positive on $\medcup_{i=1}^k J_i$, vanishes outside $\medcup_{i=1}^k J_i$, and satisfies $t(q_i) = 1$.
Finally, for a parameter $u \in \mathbb{T}$, take $f_u: \mathbb{T}^2 \to \mathbb{T}^2$
to be a diffeomorphism on $\mathbb{T}^2$ with the following properties:
\begin{enumerate}

\item For $u \in J_i$ and $1 \le j \le k$,
\[f_u := h_j \circ \varphi_{t(u)} \circ h_j^{-1},\] \label{n:Milnor:conditions:2}  \vspace{0.1cm}

\item For each $i=1,\ldots,k$, \[\int_{\mathbb{T}} \log \|Df_u(s_i)\| du<0. \] \label{n:Milnor:conditions:3}

\end{enumerate}
Item~\eqref{n:Milnor:conditions:3} can be achieved by choosing the flow $\varphi_t$ so that the eigenvalues of $D\psi_{i} (s_i)$ (which equal those at $D\psi_1 (s_1)$) are
strongly contracting relative to the eigenvalues at the other equilibria.
Observe that $f_u$ is a diffeomorphism that depends smoothly on $u$. For $u$ outside $\medcup_{i=1}^k J_i$, $f_u$ is the identity map.
For each $u \in \medcup_{i=1}^k J_i$,
\begin{align*}
\Omega (f_u) &= \{s_1,\ldots,s_{k}\}.
\end{align*}
For comparison and inspiration for further constructions we refer to \cite{MR4461435}.

Define the skew product $F$ on $\mathbb{T}\times \mathbb{T}^2$ by
\[
F(u,x) := (E(u),f_u(x)).
\]

\begin{theorem}\label{t:onS}
The mapping $F$ has $k$ {metric} attractors $\mathbb{T}\times \{s_i\}$, $i=1,\ldots,k$, whose basins are dense in $\mathbb{T}\times \mathbb{T}^2$ and mutually intermingled. The union $\mathbb{T} \times \{s_1,\ldots,s_{k}\}$ is the likely limit set.
\end{theorem}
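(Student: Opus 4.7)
The plan is to mirror the proof of Theorem~\ref{t:ifsintermingledT}, adapted to the $k$-attractor setting. I would first establish Pesin local stable manifolds at each $\mathbb{T}\times\{s_i\}$: since $E$ preserves Lebesgue measure $\lambda$ on $\mathbb{T}$ ergodically, Birkhoff's ergodic theorem combined with the negative-exponent hypothesis yields
\[
\lim_{n\to\infty}\frac{1}{n}\sum_{j=0}^{n-1}\log\|Df_{E^j(u)}(s_i)\| \;=\; \int_{\mathbb{T}}\log\|Df_u(s_i)\|\,du \;<\; 0
\]
for $\lambda$-a.e.\ $u$ and every $i$. Pesin theory, exactly as invoked in the proof of Theorem~\ref{t:ifsintermingledT}, then produces local stable manifolds $\{u\}\times U_u^i$ through $(u,s_i)$ of positive measure in the fiber, for a.e.\ $u$ and each $i$. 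Writing $W^s_i := \bigcup_u \{u\}\times U_u^i$, each $W^s_i$ has positive product measure and lies inside $\rho(\mathbb{T}\times\{s_i\})$.

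Next I would prove the intermingling. Given any open $U\subset\mathbb{T}\times\mathbb{T}^2$, expansion of $E$ produces an iterate $F^j(U)$ whose base projection covers $\mathbb{T}$, and for each $i=1,\ldots,k$ it then contains an open product set $N_i\times V_i$ with $q_i\in N_i$. Because $E(q_i)=q_i$ and $t(q_i)=1$, the fiber map over $q_i$ is exactly $\psi_i=h_i\circ\varphi_1\circ h_i^{-1}$, whose attracting fixed point $s_i$ has an open and dense basin; iterating $F$ thus keeps the slice $\{q_i\}\times V_i$ over $q_i$ and sends an open dense subset of it to $(q_i,s_i)$. Combined with continuous dependence of local stable manifolds on the base coordinate inside Pesin blocks accumulating on $q_i$, this forces some iterate $F^{j+n_i}(U)$ to meet $W^s_i$ in positive \emph{product} measure. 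Since $F$ is Lipschitz with positive Jacobian and $\rho(\mathbb{T}\times\{s_i\})$ is $F$-invariant, this pulls back to $\lambda\bigl(U\cap\rho(\mathbb{T}\times\{s_i\})\bigr)>0$ for every $i$, so all $k$ basins are dense and pairwise intermingled. The metric-attractor property then follows because for $\lambda$-a.e.\ $u$ the orbit $E^n(u)$ equidistributes on $\mathbb{T}$, so a generic point of $\rho(\mathbb{T}\times\{s_i\})$ has $\omega$-limit all of $\mathbb{T}\times\{s_i\}$, ruling out reduction of the basin to any proper closed $C\times\{s_i\}$.

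For the likely-limit-set claim I would copy the density-point argument at the end of the proof of Theorem~\ref{t:ifsintermingledT}. Assume a positive-measure set $\Lambda$ of points has $\omega$-limit not contained in $\mathbb{T}\times\{s_1,\ldots,s_k\}$; Fubini yields a fiber $\mathbb{T}\times\{q\}$ meeting $\Lambda$ in positive measure; take a Lebesgue density point $(v,q)$ of this intersection and a tiny interval $J$ around $v$; blow $J$ up to all of $\mathbb{T}$ by a suitable iterate $E^n$, while piecewise-linearity of $E$ keeps the density ratio of $\Lambda$ in $J$ invariant; and apply the intermingling step to conclude that $F^{n+m}(J)$ meets some $W^s_i$ in a uniformly positive fraction, contradicting the density-point property as $n$ grows.

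The hard part will be upgrading, in the intermingling step, from the easy observation that $F^j(U)$ meets the single fiber over $q_i$ to the stronger fact that $F^n(U)$ meets $W^s_i$ in positive product measure. The subtlety is that $q_i$ is one point of $\mathbb{T}$ and need not itself be a Pesin-regular base, so the $U_u^i$ are provided only for a positive-measure set of $u$ accumulating on $q_i$. One has to combine openness of $F^j(U)$ in the base direction with continuity in $u$ of the fiber push-forward and with the uniformity of local stable manifolds on Pesin blocks close to $q_i$. Once this is done, the remainder is essentially a uniform version of the single-attractor argument of Theorem~\ref{t:ifsintermingledT}, applied independently at each of the $k$ fixed points $q_i$.
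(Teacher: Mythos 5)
Your proposal follows essentially the same route as the paper's proof: negative fiber Lyapunov exponents at $s_i$ give Pesin local stable manifolds and hence metric attractors, expansion of $E$ spreads any open set across the base so that some iterate of $U$ meets the fiber over $q_i$ inside $W^s(s_i)$, and the likely-limit-set claim is handled by the density-point argument borrowed from Theorem~\ref{t:ifsintermingledT}. The ``hard part'' you flag (passing from intersection with the single fiber over $q_i$ to positive measure intersection with the union of Pesin stable manifolds) is precisely the step the paper compresses into ``the rest proceeds as in the proof of Theorem~\ref{t:ifsintermingledT},'' so you have correctly identified both the mechanism and where the work lies.
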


\begin{proof}
By item (\ref{n:Milnor:conditions:3}) above, the measures $\lambda\times\delta_{s_i}$, $i=1,\ldots,k$, are invariant measures for $F$ supported on $\Lambda_i:= \mathbb{T}\times \{s_i\}$, $i=1,\ldots,k$ whose fiber Lyapunov exponents are negative. In particular, by Pesin theory, see also the proof of Theorem \ref{t:ifsintermingledT}, each $\Lambda_i$ is a {metric} attractor of $F$. To prove that their basins are intermingled,  suppose that $U\subset\mathbb{T}\times S$ is an open set. Choose a natural number $N$ such that
\begin{equation}\label{n:Milnore}
F^N(U)\medcap\left(\{q_i\}\times W^s(s_i)\right)\neq \emptyset.
\end{equation}
Any point in the intersection (\ref{n:Milnore}) tends to $\Lambda_i$ under the iteration of $F$.
The rest proceeds  as in the proof of Theorem~\ref{t:ifsintermingledT}.
\end{proof}

\subsubsection{Iterated function systems}

Reference \cite{MR3600645} contains elementary constructions of skew products of interval diffeomorphisms over shifts, arising from iterated function systems, admitting {metric} attractors with intermingled basins. Here we indicate analogous constructions for surface diffeomorphisms with
more attractors.

Consider the iterated function system generated by $\psi_i$, $1\le i\le k$ (defined in Section~\ref{subsub:mma}), and using equal probability $1/k$ for all of them.
Assume that for all $j$ between $1$ and $k$,
\begin{align}\label{e:ifsL<0}
\frac{1}{k}   \sum_{i=1}^{k}   \ln \|  D\psi_i (s_j)    \| &< 0.
\end{align}
Define
$F:  \Sigma_{k} \times \mathbb{T}^2 \righttoleftarrow$
by
\begin{align}\label{e:ifsF}
F(\omega,x) &:= \left(\sigma \omega , \varphi_{\omega_0} (x)\right).
\end{align}
On $\Sigma_{k}$ we take Bernoulli measure $\nu$ corresponding to equal probabilities
for the symbols.
The inequality \eqref{e:ifsL<0} means that the fiber Lyapunov exponents at $s_j$, for any $1 \le j \le k$, are negative.  As in  Theorem~\ref{t:onS} we get the following result.

\begin{theorem}\label{t:ifsintermingledT2}
Consider the skew product system $F$ from \eqref{e:ifsF} on $\Sigma_{k} \times \mathbb{T}^2$. Take
the product of Bernoulli measure $\nu$ and Lebesgue measure $\lambda$ on $\mathbb{T}^2$ as reference measure.
The sets $\Sigma_{k} \times \{s_i\}$, $1 \le i \le k$ are minimal {metric} attractors
with intermingled basins. The union $\Sigma_{k} \times \{s_1,\ldots,s_{k}\}$ is the likely limit set.
\end{theorem}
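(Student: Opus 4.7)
The proof should mirror the argument for Theorem~\ref{t:onS} line by line, with the full one-sided shift $\sigma$ on $\Sigma_{k}$ playing the role of the expanding base map $E$ on $\mathbb{T}$.

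First I would check that each $\Lambda_i := \Sigma_k \times \{s_i\}$ is a metric attractor. Because the diffeomorphism $h_j$ permutes the equilibria $\{s_1,\ldots,s_k\}$, each $s_i$ is a fixed point of every $\psi_j$, so $\nu\times\delta_{s_i}$ is $F$-invariant. For $\nu$-almost every $\omega$ the fiber cocycle at $s_i$ factors as $D\psi_{\omega_{n-1}}(s_i)\cdots D\psi_{\omega_0}(s_i)$, whose top Lyapunov exponent is bounded above, by Birkhoff's ergodic theorem, by $\frac{1}{k}\sum_{j=1}^{k}\log\|D\psi_j(s_i)\|$; the hypothesis \eqref{e:ifsL<0} makes this strictly negative. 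The Pesin argument from the proof of Theorem~\ref{t:ifsintermingledT} then produces, for $\nu$-a.e.\ $\omega$, a local stable disc $\{\omega\}\times U_\omega^{(i)}$ of $(\omega,s_i)$, and $W^s(\Lambda_i) := \medcup_\omega\{\omega\}\times U_\omega^{(i)}$ has positive $(\nu\times\lambda)$-measure, so $\Lambda_i$ is a metric attractor.

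Next I would prove intermingling. Given an open set $U \subset \Sigma_k\times\mathbb{T}^2$, take a product $C\times V\subset U$ with $C=[a_0,\ldots,a_{m-1}]$ a cylinder of depth $m$ and $V\subset\mathbb{T}^2$ open. Then $F^m(C\times V) = \Sigma_k\times V'$ with $V'=(\psi_{a_{m-1}}\circ\cdots\circ\psi_{a_0})(V)$ open. For each index $i$, the constant sequence $i^\infty$ is fixed by $\sigma$, and on the fiber over it $F$ acts as $\psi_i$, whose deterministic basin $W^s(s_i)$ is open and dense in $\mathbb{T}^2$. Prepending a sufficiently long block $i^N$, the map $F^{m+N}$ pushes $V'$ (restricted to a set of positive Lebesgue measure) into an arbitrarily small neighbourhood of $s_i$ in fibers over $\omega$ beginning with $i^N$. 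For such $\omega$ the Pesin local stable set $U_\omega^{(i)}$ captures this neighbourhood on a positive-measure set, so $F^{m+N}(U)$ meets $W^s(\Lambda_i)$ in a set of positive $(\nu\times\lambda)$-measure, and pulling back by $F^{-(m+N)}$ (using absolute continuity of $F_*\mu$ on cylinders) gives the same conclusion for $U$ itself. Running over $i=1,\ldots,k$ gives mutual intermingling, and also minimality, since no smaller closed subset of $\Lambda_i$ supports a basin of positive measure.

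Finally, to identify the likely limit set as $\Sigma_k\times\{s_1,\ldots,s_k\}$, I would argue by contradiction exactly as in the last paragraph of the proof of Theorem~\ref{t:ifsintermingledT}: if the set $\Lambda$ of points whose $\omega$-limit sets are not contained in this union had positive measure, a Fubini plus Lebesgue density argument would produce a small cylinder-times-interval box $C\times J$ on which $\Lambda$ has density arbitrarily close to $1$, while the cylinder expansion of $\sigma$ together with the open-and-dense deterministic basins $W^s(s_i)$ of the $\psi_i$ would drive a fixed positive proportion $c>0$ of $F^N(C\times J)$ into some $W^s(\Lambda_j)$, contradicting invariance of $\Lambda$. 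The main obstacle will be the random Pesin step: one must verify that the measurably-varying stable discs $U_\omega^{(i)}$ really absorb a positive-measure portion of $V'$ after the iterates that bring it near $s_i$, rather than only touching a null subset. This is standard for finite IFS random cocycles with negative average fiber exponent, but is the one place where hypothesis \eqref{e:ifsL<0} interacts nontrivially with the geometry of the Pesin blocks.
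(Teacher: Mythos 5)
Your argument is correct and matches the (implicit) proof in the paper: the paper simply states that this theorem follows as in Theorem~\ref{t:onS}, whose proof is itself a routing through the Pesin-stable-disc argument of Theorem~\ref{t:ifsintermingledT}, and your three steps (negative fiber exponents via \eqref{e:ifsL<0} and subadditivity, intermingling via pushing a cylinder-times-ball forward along the constant word $i^N$ into the open dense basin $W^s(s_i)$, and the Fubini--density-point contradiction for the likely limit set) are exactly the way one would spell it out. Your closing caveat about the measurably varying Pesin radii is the right thing to flag, and it is handled in the paper by the measurable radius function $r(u)$ and the observation that the base map distorts cylinder proportions boundedly.
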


\subsubsection{Skew product systems over minimal torus diffeomorphisms}

We provide a construction using a skew product system over minimal torus diffeomorphisms, motivated by \cite{MR2071239,MR1858557,MR2183303}.
Let $g$ be a conservative minimal $C^\infty$ diffeomorphism on $T^2$ with two ergodic measures $\mu_{\pm 1}$ each of which are absolutely continuous with respect to  Lebesgue measure. Such a diffeomorphism can be built using Anosov-Katok method from \cite{MR0370662} (see \cite{MR1858557,MR2183303} for details). Put
$$
\Phi := \left\{\varphi:\mathbb{T}^2\to\mathbb{R} \; ; \; \int_{\mathbb{T}^2} \phi\, d\mu_{-1}<0<\int_{\mathbb{T}^2} \varphi\, d\mu_1\right\}.
$$
It is not difficult to see that $\Phi$ is nonempty.

\begin{lemma}\label{Definign constant}
For any $\varphi\in\Phi$, there is a constant $c\in (0,1)$ such that the mapping
$H_c:\mathbb{T}^2\times \mathbb{T} \righttoleftarrow$
defined by
\begin{equation*}
H_c(x,t) := (g(x),\varphi(x) + c + t  \pmod 1)
\end{equation*}
is minimal.
\end{lemma}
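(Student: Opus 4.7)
I would prove this via a Furstenberg-type minimality criterion for circle extensions over a minimal base. For any minimal homeomorphism $g$ of $\mathbb{T}^2$ and continuous $\phi:\mathbb{T}^2\to\mathbb{R}$, the skew product
\[
(x,t)\;\mapsto\;(g(x),\,t+\phi(x))
\]
on $\mathbb{T}^2\times\mathbb{T}$ is minimal if and only if, for every nonzero integer $n$, the multiplicative cohomological equation
\[
e^{2\pi i n\phi(x)}\;=\;\frac{u(g(x))}{u(x)}
\]
admits no continuous solution $u:\mathbb{T}^2\to S^1$. The first step will be to state (or cite) this criterion and then apply it with $\phi(x)=\varphi(x)+c$. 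Call $c\in\mathbb{T}$ \emph{$n$-bad} if a continuous solution $u=u_{n,c}$ exists for the given $n$.

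The main step is to show that for each fixed nonzero $n$ the set of $n$-bad parameters $c$ is at most countable. If $c_1, c_2$ are both $n$-bad with witnesses $u_1, u_2$, then $v:=u_1\overline{u_2}$ is a continuous map $\mathbb{T}^2\to S^1$ satisfying
\[
\frac{v(g(x))}{v(x)}\;=\;e^{2\pi i n(c_1-c_2)},
\]
so $v$ is a continuous eigenfunction of $g$ with eigenvalue $e^{2\pi i n(c_1-c_2)}$. Such a $v$ is in particular an eigenfunction of the Koopman operator on the separable Hilbert space $L^2(\mu_1)$, whose point spectrum is countable. Hence the group $E(g)\subset S^1$ of continuous eigenvalues of $g$ is countable, and the $n$-bad set is either empty or a translate of the countable group $\{c\in\mathbb{T}:e^{2\pi i nc}\in E(g)\}$, hence countable. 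Taking the union over $n\in\mathbb{Z}\setminus\{0\}$ yields a countable exceptional set, and any $c\in(0,1)$ outside it makes $H_c$ minimal.

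The hardest point will be justifying the minimality criterion, since $g$ is minimal but not uniquely ergodic, whereas Furstenberg's original statement is often phrased under unique ergodicity. I expect to handle this as follows: minimality is a purely topological property, and the classical argument still applies. If $K\subsetneq \mathbb{T}^2\times\mathbb{T}$ is a proper closed $H_c$-invariant subset, then by minimality of $g$ it projects onto $\mathbb{T}^2$, and expanding the indicator of $K$ in a fiberwise Fourier series and using $H_c$-invariance produces, for some nonzero $n$, a continuous solution $u$ of the cohomological equation above. I would invoke this extension of Furstenberg's criterion to general minimal isometric extensions, or else include the short fiber Fourier argument directly. Note that the hypothesis $\varphi\in\Phi$ is \emph{not} used in proving the present lemma; it enters only later, when the two ergodic measures $\mu_{\pm 1}$ are exploited to produce intermingled basins.
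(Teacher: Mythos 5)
Your proposal is correct, and it takes a genuinely different route from the paper's proof. The paper's argument is a direct Baire-category construction: it fixes a point $x$, an open basis $\{V_k\}$ of $\mathbb{T}^2$, and return times $n_i^{(k)}$ with $g^{n_i^{(k)}}(x)\in V_k$, then studies the set-valued map $c\mapsto\Theta_{(x,k)}(c)=\mathrm{Cl}\{\varphi^{n_i^{(k)}}(x)+n_i^{(k)}c\}$, proves it is lower semi-continuous, and concludes that on the residual set of common continuity points the orbit of $(x,t)$ is dense; a short auxiliary argument using \eqref{e:H^n_c} and minimality of $g$ then upgrades density of one orbit to minimality. Your route instead goes through the Furstenberg-type criterion (minimality of the circle extension is equivalent to the nonexistence, for each $n\neq 0$, of a continuous $u:\mathbb{T}^2\to S^1$ with $u\circ g=e^{2\pi i n(\varphi+c)}u$), and then shows the bad set of $c$ is countable by observing that the quotient of two witnesses is a continuous eigenfunction of $g$, and that the group of continuous eigenvalues of a minimal homeomorphism of a compact metric space is countable (orthogonality of eigenfunctions in the separable space $L^2(\mu_1)$). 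Your argument is more conceptual and in fact delivers a slightly stronger conclusion (a co-countable, rather than merely residual, set of admissible $c$); you also correctly note that the hypothesis $\varphi\in\Phi$ is irrelevant to this lemma, which is equally true in the paper's proof.

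The one place where your proposal leans on a nontrivial citation is the validity of the Furstenberg criterion over a minimal but not uniquely ergodic base. You flag this yourself, and your sketch is sound; a clean way to fill it in without Fourier series is the standard translation-stabilizer argument: if $M\subsetneq\mathbb{T}^2\times\mathbb{T}$ is a minimal subset, the rotations $R_s$ of the fiber commute with $H_c$, the stabilizer $\{s:R_s M=M\}$ is a proper closed subgroup $\tfrac{1}{n}\mathbb{Z}/\mathbb{Z}$, and $M$ pushes down to a continuous graph over $\mathbb{T}^2$ in $\mathbb{T}/(\tfrac1n\mathbb{Z}/\mathbb{Z})$, which after identifying $\mathbb{T}/(\tfrac1n\mathbb{Z}/\mathbb{Z})\cong\mathbb{T}$ via multiplication by $n$ produces exactly the required continuous solution. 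Either way, once the criterion is in hand the rest of your argument is complete, so the proposal is a valid alternative proof.
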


\begin{proof}
First note that
\begin{equation}\label{e:H^n_c}
H^n_c (x , t)=\left(g^n(x),  \varphi^n (x) + n c   + t  \pmod 1 \right),
\end{equation}
where by slight abuse of notation we have written
\[
\varphi^n (x) := \sum_{i=0}^{n-1} \varphi (g^i(x)).
\]
Let $\{V_k\}_{k\in\mathbb{N}}$ be an open basis of the topology on $\mathbb{T}^2$ and $x\in \mathbb{T}^2$ an arbitrary point. For any $k\in\mathbb{N}$, choose a sequence $\{n_i^{(k)}\}$ of natural numbers such that $g^{n_i^{(k)}}(x)\in V_k$. Now, consider the mapping $\Theta_{(x,k)}$ from $[0,1]$ to the set of all closed subsets of $[0,1]$ (endowed with the Hausdorff distance) defined by
$$
\Theta_{(x,k)}(c) :=   \text{Cl}\left(\left\{  \varphi^{n_i^{(k)}} (x)+n_i^{(k)} c \pmod 1 \right\}_{i=0}^\infty\right).
$$

\medskip
\noindent{\bf Claim.} The mapping $\Theta_{(x,k)}$ is lower semi-continuous.
\begin{proof}
For any given $\varepsilon>0$, choose $i_0\in\mathbb{N}$ such that
$$
\text{Cl}\left(\left\{\varphi^{n_i^{(k)}}(x)+n_i^{(k)} c \pmod 1\right\}_{i=0}^\infty \right) \subseteq
U_{\varepsilon/2} \left( \left\{\varphi^{n_i^{(k)}}(x)+n_i^{(k)} c \pmod 1\right\}_{i=0}^{i_0}\right).
$$
Here $U_{\varepsilon/2}(\cdot)$ stands for the open $\varepsilon/2$-neighborhood of the given closed set.
Now, if $\tilde{c}$ is sufficiently close to $c$ then
\begin{align*}
U_\varepsilon\left(\text{Cl}\left( \left\{\varphi^{n_i^{(k)}}(x)+n_i^{(k)} \tilde{c} \pmod 1\right\}_{i=0}^\infty\right)\right) \supseteq &
~U_\varepsilon\left(\left\{\varphi^{n_i^{(k)}}(x)+n_i^{(k)} \tilde{c} \pmod 1\right\}_{i=0}^{i_0}\right)
\\
\supseteq &~  U_{\varepsilon/2}\left(\left\{\varphi^{n_i^{(k)}}(x)+n_i^{(k)} c~(\text{mod}\,1)\right\}_{i=0}^{i_0}\right)
\\
\supseteq & ~ \text{Cl}\left(\left\{\varphi^{n_i^{(k)}}(x)+n_i^{(k)} c \pmod 1\right\}_{i=0}^\infty\right).
\end{align*}
\end{proof}

Let $\mathcal{R}_{(x,k)}$ be the set of continuity points of $\Theta_{(x,k)}$, which is known to be a residual set, and put $\mathcal{R}_x=\medcap_k \mathcal{R}_{(x,k)}$, a residual set again.
We prove that for any $c\in\mathcal{R}_x$, $\left\{H^n_c(x,t)\right\}_{n\in\mathbb{N}}$ is dense in $\mathbb{T}^2\times \mathbb{T}$. To prove it, let $U\times I$ be an arbitrary open set in $\mathbb{T}^2\times\mathbb{T}$ and choose $k$ such that $V_k\subset U$. Suppose by contradiction,
\begin{equation}\label{Mapping H}
H_c^{n_i^{(k)}}(x,t)\not\in U\times I,~ \text{for any}~ i.
\end{equation}
On the other hand, recall \eqref{e:H^n_c},
\begin{equation}\label{e:Hni}
H_c^{n_i^{(k)}}(x,t)=\left(g^{n_i^{(k)}}(x),\varphi^{n_i^{(k)}} (x)+n_i^{(k)}c+t \pmod 1 \right).
\end{equation}
As $g^{n_i^{(k)}}(x)\in V_k$, \eqref{Mapping H} implies that $\varphi^{n_i^{(k)}} (x)+n_i^{(k)}c+t \pmod 1\not\in I$, for any $i$. That is, $\Theta_{(x,k)}(c)\medcap I-t = \emptyset$. However, \eqref{e:Hni} makes clear that there are arbitrarily small $\delta>0$ so that
$$
\Theta_{(x,k)}(c+\delta) \medcap  I-t \neq \emptyset.
$$
This contradicts the fact that $c$ is a continuity point of $\Theta_{(x,k)}$.

Having established the existence of a point $(x,t)$ with dense positive orbit, proving minimality follows from a standard argument using expression \eqref{e:H^n_c} and minimality of $g$.
Namely, for any $\varepsilon>0$, there is $N_0$ so that $\left\{(x,t), H_c(x,t),\ldots,H^{N_0}_c (x,t)\right\}$
is $\varepsilon$-dense in $\mathbb{T}^2\times \mathbb{T}$. By expression  \eqref{e:H^n_c} the same is true for any point from $\{x\}\times \mathbb{T}$ replacing $(x,t)$. For an arbitrary point $(y,s) \in \mathbb{T}^2 \times \mathbb{T}$, since $g$ is minimal, its positive orbit accumulates on a point
$(x,u)$. It is therefore $\varepsilon$-dense. Since $\varepsilon$ is arbitrary,
$\left\{H^n_c (y,u)\right\}_{n\in\mathbb{N}}$ is dense for any $(y,s) \in \mathbb{T}^2\times\mathbb{T}$.
\end{proof}

Let $F$ be a north pole ($p_N$)/south pole ($p_S$) diffeomorphism on $\mathbb{T}$ and consider its suspension flow defined by $F_s(y,t)=(y,t+s)$ on the suspended manifold $M$ given by $(y,t+1)\sim(F(y),t)$ (see for instance  \cite[Chapter~3, Proposition~3.7]{MR0669541} and \cite[Section~3.4.1]{MR3558990}).
The flow $F_s$ has invariant sets
\[
\Lambda_S:= \{p_S\}\times \mathbb{T}, \qquad \Lambda_N:= \{p_N\}\times \mathbb{T}.
\]
Choose $\varphi\in\Phi$ and a constant $c$ such that the mapping $H_c$ defined in Lemma \ref{Definign constant} is minimal. For simplicity denote $\varphi+c$ by $\varphi$ again. Define 
$H: \mathbb{T}^2\times M\righttoleftarrow$
by
\begin{equation}\label{Defining H}
H\left(x,(y,t)\right):=\left(g(x), F_{\varphi(x)}(y,t)\right).
\end{equation}

\begin{theorem}
The mapping $H$ defining by (\ref{Defining H}) has two metric attractors $\mathbb{T}^2\times \Lambda_S$ and $\mathbb{T}^2\times \Lambda_N$ whose basins are intermingled.
\end{theorem}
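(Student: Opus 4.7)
The plan is to bypass Pesin theory and describe the basins directly in terms of the Birkhoff sums $\varphi^n(x)=\sum_{i=0}^{n-1}\varphi(g^i(x))$, then turn the description into intermingling via absolute continuity of $\mu_{\pm 1}$ combined with minimality of $g$. First I would identify natural $H$-invariant measures $\nu_S=\mu_1\times \mathrm{Leb}_{\Lambda_S}$ and $\nu_N=\mu_{-1}\times \mathrm{Leb}_{\Lambda_N}$ on the two candidate attractors; these are invariant because the restriction of $H$ to each $\mathbb{T}^2\times\Lambda_{S/N}$ is conjugate to the minimal map $H_c$ of Lemma~\ref{Definign constant}, which preserves $\mu_{\pm 1}\times\mathrm{Leb}_{\mathbb{T}}$. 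The transverse fiber Lyapunov exponents (in the $y$-direction) are then read off from the fact that the suspension flow contracts the transverse direction at rate $\log|F'(p_S)|<0$ and expands it at rate $\log|F'(p_N)|>0$ per unit time, so Birkhoff applied to $\varphi$ gives
\[
\chi_S=\log|F'(p_S)|\int\varphi\,d\mu_1<0, \qquad \chi_N=\log|F'(p_N)|\int\varphi\,d\mu_{-1}<0,
\]
both signs being forced by $\varphi\in\Phi$.

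Next I would read off the basins directly from $H^n(x,(y,t))=(g^n(x),F_{\varphi^n(x)}(y,t))$: the number of iterates of $F$ applied to $y$ inside the suspension is essentially $\lfloor\varphi^n(x)+t\rfloor$, so provided $y\neq p_N$ the $y$-component tends to $p_S$ as soon as $\varphi^n(x)\to+\infty$, and symmetrically for $y\neq p_S$ and $\varphi^n(x)\to-\infty$. Writing $B_{\pm 1}\subset\mathbb{T}^2$ for the basins of $\mu_{\pm 1}$ under $g$, Birkhoff's theorem yields $\varphi^n(x)/n\to\int\varphi\,d\mu_{\pm 1}$ for $x\in B_{\pm 1}$, and therefore
\[
B_1\times(M\setminus\Lambda_N)\subset\rho(\mathbb{T}^2\times\Lambda_S), \qquad B_{-1}\times(M\setminus\Lambda_S)\subset\rho(\mathbb{T}^2\times\Lambda_N).
\]
Moreover, minimality of $H|_{\mathbb{T}^2\times\Lambda_{S/N}}=H_c$ forces every $\omega$-limit set inside $\mathbb{T}^2\times\Lambda_{S/N}$ to equal the whole of $\mathbb{T}^2\times\Lambda_{S/N}$, which verifies condition~(2) of the metric-attractor definition as soon as condition~(1) is in hand.

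It remains to upgrade the two inclusions above to positive Lebesgue measure in every open set, yielding simultaneously condition~(1) and the intermingling property. By Fubini this reduces to showing that both $B_1$ and $B_{-1}$ have positive Lebesgue measure in every open $V\subset\mathbb{T}^2$, and here the three hypotheses on $g$ combine: (i) minimality of $g$ forces the closed $g$-invariant set $\mathrm{supp}(\mu_{\pm 1})$ to equal $\mathbb{T}^2$, so $\mu_{\pm 1}(V)>0$ for every nonempty open $V$; (ii) ergodicity of $\mu_{\pm 1}$ gives $\mu_{\pm 1}(B_{\pm 1})=1$, hence $\mu_{\pm 1}(V\cap B_{\pm 1})=\mu_{\pm 1}(V)>0$; and (iii) absolute continuity of $\mu_{\pm 1}$ with respect to $\lambda$ promotes this to $\lambda(V\cap B_{\pm 1})>0$. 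This is the delicate step of the proof and is exactly where the Anosov-Katok hypothesis on $g$ --- minimality together with the existence of two distinct absolutely continuous ergodic invariant measures --- is used essentially; without it, one could not rule out the $B_{\pm 1}$ being Lebesgue-negligible in some open set of $\mathbb{T}^2$. One more application of Fubini then gives intermingling of the two basins in every open subset of $\mathbb{T}^2\times M$.
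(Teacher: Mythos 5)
Your proof is correct and follows essentially the same route as the paper's: read the asymptotics of the $y$-component directly off the formula $H^n(x,(y,t))=(g^n(x),F_{\varphi^n(x)}(y,t))$, apply Birkhoff to the two ergodic measures to send $\varphi^n(x)\to\pm\infty$ on the respective generic sets, and use the Anosov--Katok hypotheses on $g$ to conclude. The one genuine improvement over the paper's very terse write-up is that you spell out explicitly the three-step chain (full support from minimality $\Rightarrow$ $\mu_{\pm1}(V)>0$; ergodicity $\Rightarrow$ $\mu_{\pm1}(V\cap B_{\pm1})>0$; absolute continuity $\Rightarrow$ $\lambda(V\cap B_{\pm1})>0$) that upgrades topological density of the two basins to measure-theoretic density, which the paper leaves implicit in the phrase ``take $\tilde{x}$ close to $x$ to be a generic point.'' The fiber Lyapunov exponent computation you include is correct but unused in your argument (and in the paper's); the direct description of the basins via the floor of the Birkhoff sum already does the work, so that paragraph could be dropped.
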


\begin{proof}
Note that
\begin{align}\label{Np/Sp:AK method}
H^n\left(x,(y,t)\right)
&=
\left(g^n(x), F_{\varphi^n(x)}(y,t)\right) \nonumber
\\
&=
\left(g^n(x), F^{[t+\varphi^n(x)]}(y), t+\varphi^n(x) \pmod  1 \right),
\end{align}
where $[\cdot]$ denotes the integer part and as before, $\varphi^n(x)=\sum_{i=0}^{n-1} \varphi (g^i(x))$.

In view of Lemma \ref{Definign constant}, $\mathbb{T}^2\times \Lambda_S$ and $\mathbb{T}^2\times \Lambda_N$ are metric attractors of $H$. We prove that the basins of attraction are intermingled.
For this, let $\big(x,(y,t)\big)$ be an arbitrary point in $\mathbb{T}^2\times M$. Take $\tilde{x}$ close to $x$ to be a generic point with respect to $\mu_1$ and $\tilde{y}\neq p_N$ close to $y$. It is not difficult to see that by \eqref{Np/Sp:AK method},
$$
H^n\left(\tilde{x},(\tilde{y},t)\right)\to \mathbb{T}^2\times \Lambda_S
$$
as $n \to \infty$.
Likewise take $\hat{x}$, a generic point with respect to $\mu_{-1}$, and $\hat{y}\neq p_S$, sufficiently close to $x$ and $y$ respectively. Then
$$
H^n\left(\hat{x},(\hat{y},t)\right)\to \mathbb{T}^2\times \Lambda_N
$$
as $n\to \infty$.
\end{proof}

\section{Flows of smooth vector fields}\label{s:examplesforflows-p}

We construct similar examples for flows of smooth vector fields.
Let us first remark that the definitions of {metric} attractor, likely limit set, and intermingled basin given in the introduction for continuous maps transfer to flows $x\mapsto \phi_t (x)$ generated by differential equations.

We will consider skew product systems over an ergodic volume preserving flow.
To be concrete, let $g$ be a smooth vector field that generates the suspension
of a hyperbolic torus automorphism.
The flow $\psi_t: M \to M$ of
\begin{align}
\label{e:g-p}
\dot{u} &=  g(u)
\end{align}
on $M := \mathbb{T}^3 = \left(\mathbb{R}/\mathbb{Z}\right)^3$ preserves Lebesgue measure $\lambda$ and is ergodic.
Consider the flow $\varphi_t: N \to N$ of a Morse-Smale gradient differential equation
$\dot{x} = f(x)  = -\text{grad}(h)$ on a compact manifold $N$ as in \eqref{e:odef-p}, \eqref{e:odegradh}.
We will also write $\varphi(t,x) = \varphi_t (x)$.
Take a skew product systems of the form
\begin{align}
\nonumber
\dot{u} &=  g(u),
\\
\label{e:zetagf-p}
\dot{x} &= \zeta(u,x) f(x),
\end{align}
with $(u,x) \in M \times N$ and with $\zeta : M \times N \to \mathbb{R}$ a smooth scalar function.
Take volume on $M\times N$ as reference measure.
Write $\Phi_t(u,x)$ for the flow of \eqref{e:zetagf-p}.

\begin{example}\label{ex:eta-p}
A special case is where $\zeta(u,x)  =\eta(u)$ depends only on $u$.
Let $\eta : M \to \mathbb{R}$ be a smooth scalar function taking positive and negative values
and satisfying
\begin{align}
\label{e:eta>0-p}
\int_{M} \eta (u) \, d\lambda(u) &> 0.
\end{align}
Consider
\begin{align}
\nonumber
\dot{u} &=  g(u),
\\
\label{e:etagf-p}
\dot{x} &= \eta(u) f(x).
\end{align}
For different values of $\eta$, the flow $\varphi(t,x_0)$ of $f$ is followed in different time directions.
Observe that
\[
\Phi_t (u_0,x_0) =  \left( \psi_t (u_0) , \varphi (\tau(t) , x_0) \right)
\]
with
\[
\tau(t) = \int_0^t \eta(u(s)) \, ds.
\]
By Birkhoff's ergodic  theorem we find
$\tau(t) \to \infty$ as $t \to \infty$ for almost all $u_0 \in M$.
Thus $M \times \{p_S\}$ is a {metric} attractor and also the likely limit set.
$\hfill \blacksquare$
\end{example}

Analogous to the choice of the function $s$ in Section~\ref{s:circle}, take the smooth function $\zeta :  M  \times N \to \mathbb{R}$ with the following properties:
\begin{enumerate}

\item There are  hyperbolic periodic orbits $q_1, q_2$ for $g$ for which
\label{i:0}
\[
\left. \zeta \right\vert_{\{q_1\} \times N} > 0, \qquad \left. \zeta \right\vert_{\{q_2\} \times N} < 0.
\]

\item We have
\[
\int_{M}  \zeta (u,p_S) \, d\lambda(u) > 0, \qquad \int_{M}  \zeta (u,p_N) \, d\lambda(u) < 0.
\]
 \label{i:3}

\end{enumerate}
Now
\[\lim_{t\to \infty} \frac{1}{t} \ln \left(\left\| D\varphi_{\tau(t)}  (p_S) \right\|\right), \qquad \text{with }\tau (t) = \int_0^t \zeta (u(s), p_S)\, ds,
\]
is negative, which
means as in Section~\ref{s:circle} that the fiber Lyapunov exponents at $p_S$ are negative.
Likewise we have only negative fiber Lyapunov exponents at $p_N$.

\begin{theorem}\label{t:flowintermingled-p}
Consider the differential equation  \eqref{e:zetagf-p} on $M \times N$
and suppose $\zeta$ satisfies \eqref{i:0}, \eqref{i:3} above.
Take volume  on $M \times N$ as reference measure.
Then $M \times \{ p_N\} $ and  $M \times \{p_S\}$
are {metric} attractors for the flow $\Phi_t$ of \eqref{e:zetagf-p} with intermingled basins.
The union $M \times \{ p_N,p_S\}$ is the likely limit set.
\end{theorem}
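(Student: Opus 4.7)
The plan is to port the proof of Theorem~\ref{t:ifsintermingledT} to continuous time, with the expanding base map $E$ replaced by the ergodic, volume-preserving, mixing Anosov base flow $\psi_t$, discrete fiber iterates $f^n_u$ by solutions of the second equation in \eqref{e:zetagf-p}, and ergodic sums along $E$-orbits by Birkhoff integrals of $\zeta$ along $\psi_t$-orbits.

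First, I verify that $M\times\{p_S\}$ and $M\times\{p_N\}$ are metric attractors. Ergodicity of $\psi_t$ together with hypothesis~\eqref{i:3} and Birkhoff's theorem gives, for $\lambda$-a.e.\ $u_0\in M$,
\[
\int_0^t \zeta(\psi_s(u_0), p_S)\, ds \;\sim\; t \int_M \zeta(u, p_S)\, d\lambda(u) \;>\; 0,
\]
and symmetrically the integral at $p_N$ tends to $-\infty$. Inserted into the linearized fiber flow exactly as in Section~\ref{s:circle}, these Birkhoff averages produce negative top fiber Lyapunov exponents at $p_S$ and $p_N$. The Pesin / direct stable-manifold estimates invoked in the proof of Theorem~\ref{t:ifsintermingledT} then yield local stable manifolds $\{u\}\times U_u$ of $(u,p_S)$ and $\{u\}\times V_u$ of $(u,p_N)$ for $\lambda$-a.e.\ $u\in M$, whose unions $W^s(M\times\{p_S\})$ and $W^s(M\times\{p_N\})$ have positive volume and lie in the respective basins.

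Second, I show the basins are intermingled. Let $U\subset M\times N$ be open and fix a product box $V\times I\subset U$. By continuity of $\zeta$, hypothesis~\eqref{i:0}, and compactness of $N$, there is a tubular neighborhood $\mathcal{T}_1$ of $q_1$ with $\zeta > \epsilon > 0$ uniformly on $\mathcal{T}_1\times N$, and symmetrically $\mathcal{T}_2$ around $q_2$ with $\zeta < -\epsilon$. By mixing of $\psi_t$, for any prescribed $T_0 > 0$ and all $T$ sufficiently large, the set of $u_0\in V$ whose $\psi_t$-orbit sits inside $\mathcal{T}_1$ throughout $[T, T+T_0]$ has positive $\lambda$-measure. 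On such an interval the fiber equation $\dot x = \zeta(\psi_t(u_0), x) f(x)$ advances in $\varphi$-time by at least $\epsilon T_0$, so choosing $T_0$ large enough drags every starting fiber $x_0\in I$ into an arbitrarily small neighborhood of $p_S$; combining this with Pesin regularity of $\lambda$-a.e.\ base point produces a positive-volume subset of $\Phi_{T+T_0}(V\times I)$ inside $W^s(M\times\{p_S\})$. Pulling back through the diffeomorphism $\Phi_{T+T_0}$ exhibits a positive-volume subset of $U$ in $\rho(M\times\{p_S\})$, and the symmetric argument at $\mathcal{T}_2$ gives the analogous intersection with $\rho(M\times\{p_N\})$.

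Finally, the likely-limit-set statement follows by the contradiction argument ending the proof of Theorem~\ref{t:ifsintermingledT}. If the set $\Lambda$ of points whose $\omega$-limit escapes $M\times\{p_N, p_S\}$ had positive volume, Fubini would produce a fiber $M\times\{q\}$ meeting $\Lambda$ in positive measure; at a Lebesgue density point $(v,q)$, with $q$ chosen so that a compact piece of its $\varphi$-orbit is bounded away from $p_S, p_N$, the previous paragraph supplies $c>0$ and times $T+T_0$ so that $\Phi_{T+T_0}(J\times\{q\})$ meets $W^s(M\times\{p_N, p_S\})$ in measure at least $c$ for a small ball $J$ around $v$. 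The main obstacle, compared with the discrete case, is that $\psi_T$ is only $C^\infty$ rather than piecewise linear, so densities are not preserved exactly under push-forward; replacing the distortion-free scaling of $E$ by the bounded Jacobian distortion of $\psi_T$ on shrinking balls, combined with Lebesgue differentiation to keep the proportion of $\Lambda$ in $J$ arbitrarily close to $1$ after push-forward, is the step requiring the most care.
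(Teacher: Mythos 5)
Your overall plan faithfully mirrors the paper's: construct local stable manifolds of the graphs via negative fiber Lyapunov exponents, exhibit positive-measure pieces of each basin inside any open set by forcing the base orbit to spend time near $q_1$ or $q_2$, and close with a Fubini/density-point contradiction for the likely limit set. The first step matches; the paper actually avoids invoking Pesin theory directly by comparing $\zeta$ with an auxiliary function $\eta(u)<\zeta(u,x)$ near $p_S$ with $\int_M\eta\,d\lambda>0$ and reusing the explicit computation of Example~\ref{ex:eta-p}, which is more elementary than your route but reaches the same conclusion.

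There is, however, a genuine gap in your second step. You invoke \emph{mixing} of $\psi_t$ to produce, for prescribed $T_0$ and large $T$, a positive-measure set of $u_0\in V$ whose orbit stays in the tube $\mathcal{T}_1$ throughout $[T,T+T_0]$. But the base flow specified in the paper is the constant-time suspension of a hyperbolic torus automorphism, and a constant-time suspension is never (even weakly) mixing: the function $e^{2\pi i s}$ in the roof coordinate is a nonconstant eigenfunction. So the mixing claim fails. The conclusion you want is nonetheless reachable by ergodicity alone: with $W:=\{u:\psi_s(u)\in\mathcal{T}_1,\ 0\le s\le T_0\}$ (open and of positive measure), the mean ergodic theorem gives $\frac{1}{T}\int_0^T\lambda(\psi_{-t}(W)\cap V)\,dt\to\lambda(W)\lambda(V)>0$, so there are arbitrarily large $t$ with $\lambda(\psi_{-t}(W)\cap V)>0$, which is all the argument needs. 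Alternatively, the paper sidesteps quantitative recurrence entirely by using the density in $M$ of the stable manifolds of the periodic orbits $q_1,q_2$ for the Anosov base flow, which puts an open set of base points into the tube for arbitrarily long time windows; that route uses only the hyperbolic structure and not any mixing-type statement.

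Your final caveat about Jacobian distortion of $\psi_T$ is not actually an obstacle in this setting and you should not leave it open-ended: the base is the suspension of a \emph{linear} hyperbolic automorphism, so $D\psi_T$ is independent of the base point and the push-forward along the base is distortion-free, exactly as in the piecewise-linear case $E$. The paper makes precisely this observation (``Since the dynamics in the base is a suspension of a linear hyperbolic torus map, the proportion $\Lambda\cap\Sigma$ in $\Sigma$ remains unchanged''); your proposed bounded-distortion-plus-Lebesgue-differentiation patch would work for a general Anosov base but is unnecessary here.
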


\begin{proof}
Take a neighborhood $U$ of $p_S$ and a smooth scalar  function $\eta : M \to \mathbb{R}$ so that
\begin{align}
\label{e:eta<zeta-p}
\eta (u) < \zeta (u,x), \qquad u \in M, x \in U,
\end{align}
and so that \eqref{e:eta>0-p} holds.
Let $z(t)$ be a solution to $\dot{z} = f(z)$ contained in the basin of attraction of $p_S$ for the flow $\varphi_t$.
Take a solution $(u(t) , x(t))$ to \eqref{e:etagf-p} with $u(0) = u_0$ and $x(0) = z (T)$ for some $T$. Write $x(t) = z(\tau(t))$, where $\tau (0)=T$.

For almost all $u_0 \in M$,  $\lim_{t\to\infty} \tau (t) = \infty$ (see Example~\ref{ex:eta-p}). For such $u_0$, $\tau (t)$ for $t \ge 0$  has a minimum value.
For a larger starting point $\tau(0) = T$ the minimum value increases.
Now $T$ large corresponds to
an initial point close to $p_S$. So for $T$ large enough,   $x(t)$, $t \ge 0$,  stays in a neighborhood of $p_S$ and converges to $p_S$.
This shows that for almost all $u \in M$,
there are local stable manifolds $\{u\} \times U_u$ of $(u,p_S)$.
By \eqref{e:eta<zeta-p},
these manifolds are local stable manifolds of $(u,p_S)$ also for \eqref{e:zetagf-p}.
Likewise there are local stable manifolds $u \times  V_u$ of $(u,p_N)$, for
which
$\Phi_t (u,z)$, $z \in V_u$, converges to $M \times \{p_N\}$ as $t \to \infty$.
We conclude that  $M \times \{p_N\}$ and $M \times \{p_S\}$ are {metric} attractors.

Write
\[
 W^s (M\times \{p_S\}) :=   \bigcup_{u \in M} \{u\} \times U_u
\]
 for the union of local stable manifolds of $(u,p_S)$, and
\[
 W^s (M\times \{p_N\}) :=   \bigcup_{u \in M} \{u\} \times V_u
\]
 for the union of local stable manifolds of $(u,p_N)$.
Now take an open set $U \subset M \times N$.
Since stable manifolds of periodic orbits of \eqref{e:g-p} lie dense in $M$,
any open set will intersect the stable manifolds of $q_1$ and $q_2$ from \eqref{i:0}.
This implies that for large enough $t$, $\Phi_t (U)$ intersects both  $W^s (M\times \{p_S\})$  and  $W^s (M\times \{p_N\})$ in sets of positive measure.
The {metric} attractors $M \times \{p_N\}$ and $M \times \{p_S\}$
therefore have intermingled basins of attraction, where both basins lie dense in $M\times N$.

The proof that $M \times \{p_N,p_S\}$ is the likely limit set goes as in
Theorem~\ref{t:ifsintermingledT}. We give a brief account.
As in that proof, consider the set $\Lambda\subset M\times B$ consisting of points whose $\omega$-limit sets are not contained in
$M \times \{p_N,p_S\}$.   Suppose $\Lambda$ has positive volume.
Then by Fubini's theorem there is a slice $M \times \{y\}$ for some $y \in B$
that intersects $\Lambda$ in a set of positive volume.
Take a Lebesgue density point $(v,y)$ of $\Lambda \medcap \left(M \times \{y\}\right)$.
Consider the flow of a small ball $\Sigma$ around $(v,y)$ in $M \times \{y\}$. For high enough $t$,  $\Phi_t (\Sigma)$ will intersect the union of local stable manifolds of
$(u,p_N)$, and  the union of local stable manifolds of
$(u,p_S)$. These intersections are of positive measure.
Since the dynamics in the base is a suspension of a linear hyperbolic torus map, the proportion $\Lambda \medcap \Sigma$ in $\Sigma$
remains unchanged under iteration. This gives a contradiction.
\end{proof}

\begin{corollary}
Consider the time-one map $\Phi_1$ on $\mathbb{T}^2 \times N$.  Then $\mathbb{T}^2 \times \{ p_N\}$ and $\mathbb{T}^2 \times \{p_S\}$ are {metric} attractors for $\Phi_1$ with intermingled basins.
\end{corollary}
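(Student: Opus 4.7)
The plan is to view $\mathbb{T}^2\times N$ as a global Poincar\'e section of the flow $\Phi_t$ inside $M\times N$, and to transfer Theorem~\ref{t:flowintermingled-p} to the first-return map, which here coincides with $\Phi_1$.

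Write $M=\mathbb{T}^3=(\mathbb{T}^2\times\mathbb{R})/{\sim}$ with $(x,s+1)\sim(A(x),s)$ for the hyperbolic toral automorphism $A$ whose suspension is generated by $g$, so that $\psi_t(x,s)=(x,s+t)$ and $\psi_1$ preserves the cross-section $\Sigma:=\mathbb{T}^2\times\{0\}$, acting there as $A$. Since $f(p_S)=f(p_N)=0$, the sets $M\times\{p_S\}$ and $M\times\{p_N\}$ are fiberwise equilibria of \eqref{e:zetagf-p}, hence $\Phi_t$-invariant. Consequently $\Sigma\times N$ is $\Phi_1$-invariant; identifying it with $\mathbb{T}^2\times N$, the restriction of $\Phi_1$ to this section is a smooth skew product over $A$.

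Next I would check that basins restrict correctly. The elementary identity $\omega_{\Phi_t}(u,x)=\bigcup_{s\in[0,1]}\Phi_s\bigl(\omega_{\Phi_1}(u,x)\bigr)$ together with flow-invariance of $M\times\{p_S\}$ gives
\[
\omega_{\Phi_t}(u,x)\subset M\times\{p_S\}\quad\Longleftrightarrow\quad\omega_{\Phi_1}(u,x)\subset M\times\{p_S\},
\]
and likewise for $p_N$. Intersecting with the $\Phi_1$-invariant cross-section yields
\[
\rho_{\Phi_1}\bigl(\mathbb{T}^2\times\{p_S\}\bigr)=\rho_{\Phi_t}\bigl(M\times\{p_S\}\bigr)\cap(\Sigma\times N),
\]
and the analogous equality for $p_N$.

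For positivity of measure and for intermingling, given any open $U\subset\mathbb{T}^2\times N$ I would form the flow-box $\widetilde U:=\bigcup_{s\in[0,1]}\Phi_s(U)$, which is open in $M\times N$ and diffeomorphic to $U\times[0,1]$ via $(y,s)\mapsto\Phi_s(y)$ (transversality of $g$ to $\Sigma$ is clear from the suspension structure). By Theorem~\ref{t:flowintermingled-p} both flow-basins meet $\widetilde U$ in a set of positive volume; Fubini in the flow direction, combined with flow-invariance $\Phi_{-s}(B)=B$ of each basin $B$, descends this to positive $(2+\dim N)$-dimensional measure inside $U$. Choosing $U$ to be a small neighborhood of $\mathbb{T}^2\times\{p_S\}$ (resp.\ $\mathbb{T}^2\times\{p_N\}$) makes these sets metric attractors for $\Phi_1$, and taking $U$ arbitrary yields the intermingling property.

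The main obstacle I anticipate is the flow-box/Fubini step: one must justify rigorously that a flow-invariant set of positive volume in the tubular neighborhood $\widetilde U$ has positive transverse measure on the cross-section $U$. This is standard once written through the change of variables $(y,s)\mapsto\Phi_s(y)$ with its smooth positive Jacobian on the compact tube, but it is the only place where measure-theoretic care (as opposed to a direct reference to Theorem~\ref{t:flowintermingled-p}) is needed.
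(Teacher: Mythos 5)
Your proposal takes essentially the same approach as the paper: view $\mathbb{T}^2\times N$ as the cross-section $\Sigma\times N$ of the suspension, note that $\Phi_1$ preserves it, observe that $\Phi_t$-convergence to $M\times\{p_S\}$ (resp.\ $M\times\{p_N\}$) is equivalent to $\Phi_1$-convergence of the discrete orbit on the section, and thereby transfer Theorem~\ref{t:flowintermingled-p}. The paper's proof is only the two-sentence observation of this equivalence; your flow-box/Fubini step for descending positive measure from $M\times N$ to the section is a correct and welcome elaboration of a point the paper leaves implicit.
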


\begin{proof}
Take $(u_0,0) \in \mathbb{T}^2 \times \mathbb{T}$.
The orbit  $\Phi_t (u_0,0, x_0)$ converges to $M \times \{p_S\}$ if and only if the sequence
$(u_n,x_n) =  \Phi_n (u_0,x_0)$, $n \in \mathbb{N}$,  converges to $\mathbb{T}^2 \times \{p_S\}$.
\end{proof}

\section{Intermingled basins of two thick attractors}

This section contains a construction of a skew product system, arising from an iterated function system, possessing a pair of thick attractors with intermingled basins.
We start with the construction of an iterated function system with both a thick attractor and a thick repeller, after which a random walk on orbits is introduced. We continue with sketching alternative constructions, leading to skew product systems with several thick metric attractors
and mutually intermingled basins. We will restrict to skew product systems over symbolic dynamics, and will not discuss extensions to smooth maps.

\subsection{Iterated function system with a thick attractor/thick repeller pair}

In building a model we start with the introduction of an iterated function system on the unit interval $[0,1]$
generated by two diffeomorphisms $f_0,f_1$.
\begin{figure}[!ht]
	\begin{center}
        \input{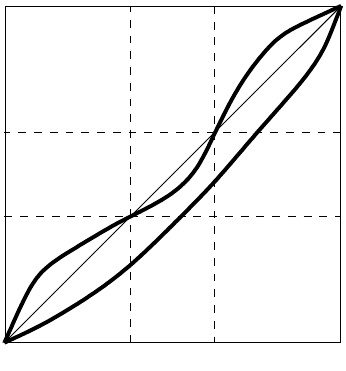_t}
		\caption{\label{f:thick-attractor-repeller} We consider an iterated function system generated by diffeomorphisms $f_0$ and $f_1$ on $[0,1]$ with graphs as depicted. There is an invariant interval $I_l = [0,l]$. For the inverse maps $f_0^{-1}$ and $f_1^{-1}$,   the interval $I_r = [r,1]$ is invariant.  }
\end{center}
\end{figure}
The graphs of $f_0$  and $f_1$ are sketched in Figure~\ref{f:thick-attractor-repeller}.
Both maps fix the end points $0$ and $1$.
For points $0<l<r<1$ we have
\begin{align*}
f_1 (l) &= l, \hspace{1cm} f_1(x) > x \textrm{ for } x \in (0,l),
\\
f_1 (r) &= r, \hspace{1cm} f_1(x) > x \textrm{ for } x \in (r,1).
\end{align*}
We further have
\begin{align*}
f_0(x) < x \textrm{ for } x \in (0,1), \qquad
f_1(x) < x \textrm{ for } x \in (l,r).
\end{align*}
The interval $I_l = [0,l]$ is mapped into itself by $f_0$ and $f_1$.
The interval $I_r = [r,1]$ is mapped into itself by $f_0^{-1}$ and $f_1^{-1}$.
We take conditions on derivatives at $0$ and $1$, namely
\begin{align*}
f_0'(0)f_1'(0)  > 1,
\qquad
(f_0^{-1})'(1)(f_1^{-1})'(1)  > 1,
\end{align*}
and generic conditions on second order derivatives, see \cite[Section~3.1]{MR2853609} or \cite[Proposition~2.1]{MR3600645}.

The iterated function system generated by $f_0$ and $f_1$ has a representation as a skew product system
$F: \Sigma_2 \times [0,1] \righttoleftarrow$
given by
\begin{align*}
F(\omega , x) &:= \left(\sigma \omega , f_{\omega_0} (x) \right).
\end{align*}
This defines a homeomorphism on $ \Sigma_2 \times [0,1]$.
Denote iterates of $F$ by
\[
F^n (\omega,x) = (\sigma^n , f^n_\omega (x)).
\]
On $\Sigma_2$ we take Bernoulli measure $\nu_{1/2}$.
By \cite{MR2644340} the map $F$ restricted to
$\Sigma_2 \times I_l$ admits a thick {metric} attractor
\[
\Lambda_l := \bigcap_{n=0}^{\infty} F^n ( \Sigma_2 \times I_l )
\]
with $\nu_{1/2}\times \lambda  (\Lambda_l) > 0$.
The attractor is characterized by
\begin{align*}
\Lambda_l &= \left\{  (\omega , [0, X_l (\omega)] ) \; ; \; \omega \in \Sigma_2 \right\}
\end{align*}
for an invariant measurable function $X_l : \Sigma_2 \to [0,l]$
with $X_l > 0$ almost everywhere.
The values $X_l(\omega)$ are obtained from a pullback construction
\begin{align}
\label{e:Xlpullback}
X_l(\omega) &:= \lim_{n\to\infty} f^n_{\sigma^{-n} \omega} (l),
\end{align}
where $f^n_{\sigma^{-n} \omega} (l)$ is a monotone decreasing sequence.
For any $(\omega,x) \in \Lambda_l$ with $0 < x < X_l(\omega)$ and any neighborhood $U$ of $(\omega,x)$ there is a point $(\omega', X_l (\omega')) \in U$.

The skew product $F$ likewise admits a thick metric repeller
\[
\Lambda_r := \bigcap_{i=0}^{\infty} F^{-i} ( \Sigma_2 \times I_r )
\]
 in $\Omega^+ \times I_r$. That is,
$\nu_{1/2} \times \lambda (\Lambda_r) >0$ and $\Lambda_r$ is a thick attractor for $F^{-1}$.
The argument of \cite[Remark~1]{MR2644340} and \eqref{e:Xlpullback} shows that every open ball $U \subset \Sigma_2 \times [0,1]$ has nonempty intersection with the complement of $\Lambda_l \medcup \Lambda_r$.

\subsection{Thick {metric} attractors with intermingled basins}\label{subs:thick}

The next step is to define a random walk on orbits of $F$.
As before, With $\Omega := \{-1,+1\}^\mathbb{Z}$, define
$G: \Omega \times  \Sigma_2 \times [0,1] \righttoleftarrow$
by
\begin{align*}
G(\eta,\omega,x) &:= \left(\sigma \eta , F^{\eta_0} (\omega,x) \right).
\end{align*}
Take a continuous function $p: [0,1] \to (0,1)$ with
\[
p(x) = p_l > 1/2, \text{ for } x \in I_l, \quad p(x) = p_r < 1/2, \text{ for }  x \in I_r.
\]
Let $\zeta_{\omega,x}$ be the measure on $\Omega$ which is defined on cylinders by
\begin{align*}
\zeta_{\omega,x} ([a_0\cdots a_k]) &:=  \prod_{i=0}^k p_{a_i} (f^{S_i (a)}_\omega (x)),
\end{align*}
where, as before, $p_{-1} (x) := 1 - p (x)$ and $p_{1} (x) := p(x)$ and
$S_i(a) = \sum_{i=0}^{i-1} a_i$.
Define the measure $\mu$ on $\Omega \times \Sigma_2 \times [0,1]$ by
\begin{align*}
\mu (A) &:= \int  \mathbbm{1}_{A_{(\omega,x)}} \,  d\zeta_{\omega,x}    d (\nu_{1/2}\times \lambda) (\omega,x),
\end{align*}
where $A_{(\omega,x)} = A \medcap (\Omega^+ \times \{(\omega,x)\})$.
Iterates of $G$ are of the form
\begin{align*}
G^n (\eta,\omega,x) &=  \left(\sigma^n \eta , \sigma^{S_n(\eta)} \omega ,  f^{S_n (\eta)}_\omega (x) \right). 
\end{align*}
As in the proof of Theorem~\ref{t:northsouthintermingled}, we find that for $\mu$-almost all $(\eta,x)$, $S_n(\eta)$ goes to $\infty$ or $-\infty$ as $n\to\infty$.

With
$\chi: \Omega\times\Sigma_2 \righttoleftarrow$
given by
\[
\chi (\eta,\omega) := (\sigma \eta, \sigma^{\eta_0} \omega),
\]
we can consider $G$ as a skew product of interval diffeomorphisms over $\chi$.
Namely,
 \[
G(\eta,\omega,x) = \left\{  \begin{array}{ll}
                          \left( \chi (\eta,\omega), f_0(x)\right), & (\eta_0,\omega_0) =  (+,0), \vspace{0.1cm}
                          \\
  \left(\chi (\eta,\omega),f_1(x)\right), &  (\eta_0,\omega_0) =  (+,1),\vspace{0.1cm}
\\
  \left(\chi (\eta,\omega),f_0^{-1}(x)\right), &  (\eta_0,\omega_{-1}) =  (-,0), \vspace{0.1cm}
\\
  \left(\chi (\eta,\omega),f_1^{-1}(x)\right), &  (\eta_0,\omega_{-1}) =  (-,1).
\end{array}
\right.
\]
For (positive) orbits contained in $\Omega\times\Sigma_2 \times I_l$, the measure we consider on $\Omega$ is $\nu_{p_l}$. For such orbits  the following lemma becomes useful.

\begin{lemma}
The measure $P := \nu_{p_l} \times \nu_{1/2}$  is an invariant ergodic measure for $\chi$.
\end{lemma}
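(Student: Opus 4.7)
The plan is to establish invariance and ergodicity as two separate steps. For invariance, I would test $P$ against product cylinders $A \times B$ with $A \subset \Omega$ and $B \subset \Sigma_2$, since these generate the Borel $\sigma$-algebra. Splitting $\chi^{-1}(A \times B)$ according to whether $\eta_0 = +1$ or $\eta_0 = -1$ gives
\[
\chi^{-1}(A \times B) = \bigl[(\{\eta_0 = +1\} \cap \sigma^{-1}A) \times \sigma^{-1}B\bigr] \sqcup \bigl[(\{\eta_0 = -1\} \cap \sigma^{-1}A) \times \sigma B\bigr].
\]
Because $\sigma$ is invertible on the two-sided space $\Sigma_2$ and $\nu_{1/2}$ is bilateral Bernoulli, $\nu_{1/2}$ is invariant under both $\sigma$ and $\sigma^{-1}$, so the fiber mass is $\nu_{1/2}(B)$ in both cases. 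Summing the two pieces and using that $\sigma$ preserves $\nu_{p_l}$ on $\Omega$ recovers $\nu_{p_l}(A)\nu_{1/2}(B) = P(A \times B)$.

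For ergodicity, I would actually prove the stronger property that $\chi$ is mixing with respect to $P$. Using $\chi^n(\eta,\omega) = (\sigma^n\eta, \sigma^{S_n(\eta)}\omega)$ with $S_n(\eta) = \sum_{i=0}^{n-1}\eta_i$, and integrating out the fiber first, one obtains for cylinders $A, A' \subset \Omega$ and $B, B' \subset \Sigma_2$
\[
P\bigl(\chi^{-n}(A' \times B') \cap (A \times B)\bigr) = \int_{\Omega} \mathbbm{1}_A(\eta)\,\mathbbm{1}_{A'}(\sigma^n\eta)\,\nu_{1/2}\bigl(B \cap \sigma^{-S_n(\eta)} B'\bigr)\,d\nu_{p_l}(\eta).
\]
Since $B$ and $B'$ are cylinders depending on only finitely many coordinates, once $|S_n(\eta)|$ exceeds the combined depth of $B$ and $B'$ the sets $B$ and $\sigma^{-S_n(\eta)}B'$ involve disjoint coordinates, and the inner factor becomes exactly $\nu_{1/2}(B)\nu_{1/2}(B')$ by the product structure of Bernoulli measure.

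The main — and really only non-formal — input is that $|S_n(\eta)| \to \infty$ for $\nu_{p_l}$-almost every $\eta$. This follows from the strong law of large numbers: since $\nu_{p_l}$ assigns mass $p_l > 1/2$ to the symbol $+1$, we have $S_n(\eta)/n \to 2p_l - 1 > 0$ almost surely, so $S_n(\eta) \to \infty$. Dominated convergence then reduces the above integral to $\nu_{1/2}(B)\nu_{1/2}(B')\int_\Omega \mathbbm{1}_A\, \mathbbm{1}_{A'}\!\circ\sigma^n \,d\nu_{p_l} + o(1)$, and mixing of the Bernoulli shift on $(\Omega, \nu_{p_l})$ turns the remaining integral into $\nu_{p_l}(A)\nu_{p_l}(A')$ as $n \to \infty$. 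Multiplying, the limit is $P(A \times B)\,P(A' \times B')$, which establishes mixing on a generating algebra and hence on all of the Borel $\sigma$-algebra; mixing implies ergodicity.
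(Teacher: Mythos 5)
Your proof is correct, but it takes a genuinely different route from the paper's. The paper establishes ergodicity by transporting $\chi$ through a measurable isomorphism to the piecewise-affine map $J$ on $[0,1]^2\times[0,1]^2$ built from baker maps (see \eqref{e:baker}), and then running a Hopf argument with almost-everywhere defined local stable and unstable manifolds. You instead work directly on cylinder rectangles and prove the stronger property that $\chi$ is \emph{mixing}. Both arguments hinge on the same probabilistic input --- that $S_n(\eta)\to\infty$ for $\nu_{p_l}$-a.e.\ $\eta$ because $p_l>1/2$ --- but you use it to show that $B$ and $\sigma^{-S_n(\eta)}B'$ eventually depend on disjoint coordinate blocks (so the inner $\nu_{1/2}$-measure factorizes), whereas the paper uses it implicitly to justify that the slices $\{x=x_0\}$ in the $J$-picture are genuine stable manifolds despite the non-uniform fiber contraction. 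Your approach is more elementary and self-contained: it avoids the isomorphism and the Hopf argument (which, in this non-uniformly contracting setting, requires some care) and delivers mixing rather than bare ergodicity. The paper's route is more geometric and matches the hyperbolic-dynamics toolbox used elsewhere in the article. Two minor points you should make explicit if you write this up: the interchange justified by dominated convergence is really the statement
\[
\int_\Omega \bigl|\nu_{1/2}\bigl(B\cap\sigma^{-S_n(\eta)}B'\bigr)-\nu_{1/2}(B)\nu_{1/2}(B')\bigr|\,d\nu_{p_l}(\eta)\longrightarrow 0,
\]
from which the $o(1)$ term follows after multiplying by the bounded factor $\mathbbm{1}_A\,\mathbbm{1}_{A'}\!\circ\sigma^n$; and passing from mixing on rectangles of cylinders to mixing on the full Borel $\sigma$-algebra uses the standard approximation by the algebra of finite unions of such rectangles, which is routine but worth a sentence.
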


\begin{proof}
Invariance of $P$ is clear and so we only have to prove ergodicity.
In the same way as the shift on $\Sigma_2$ endowed with Bernoulli measure $\nu_{p}$ is measurably isomorphic to the two-dimensional baker map $B_p$ (from \eqref{e:baker}), one has that
$\chi$, endowed with $P$, is measurably isomorphic to
$J: [0,1]^2 \times [0,1]^2 \righttoleftarrow$
given by
\[
J(x,y,u,v) := \left\{
                        \begin{array}{ll} \left(B_p(x,y) ,  B_{1/2} (u,v)\right),  &  0 \le x < p, \vspace{0.1cm}
                                             \\
                                          \left(B_p(x,y) , B_{1/2}^{-1} (u,v)\right),  &  p \le x \le 1,
                        \end{array}
             \right.
\]
and endowed with Lebesgue measure.
Lebesgue measure is indeed invariant for $J$. For Lebesgue almost all points, there are local stable and local unstable manifolds.
Namely, for Lebesgue almost all $x_0$, $W^s  := \{ x = x_0, 0\le y,u,v < 1\}$ has the property that $G^n (x_0,y,u,v) - G^n(x_0,y',u',v')$ goes to zero as $n\to \infty$.
Similar for $W^u  := \{ y = y_0, u=u_0, v=v_0, 0\le x < 1\}$ and iterates under $G^{-n}$.
A standard Hopf argument (see \cite[Section~4.2.6]{MR3558990}) shows ergodicity, just as for hyperbolic diffeomorphisms. We note that 
\cite[Theorem~3.2]{homburg2022iterated}
treats a similar situation of piecewise linear maps.
\end{proof}

\begin{theorem}
Consider the skew product system $F$ from \eqref{e:F} on  $\Omega\times\Sigma_2 \times [0,1]$. Take $\mu$ as reference measure.
The sets  $\Omega\times \Lambda_l$ and $\Omega\times \Lambda_r$
are thick {metric} attractors for $G$ with intermingled basins.
The union  $\Omega\times \left(\Lambda_l \medcup \Lambda_r\right)$ is the likely limit set.
\end{theorem}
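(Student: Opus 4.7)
The plan is to reduce the analysis to the position-dependent random walk along $F$-orbits from Section~\ref{s:pdp}, mirroring the argument of Theorem~\ref{t:northsouthintermingled} but with the thick attractor $\Lambda_l$ and thick repeller $\Lambda_r$ of $F$ playing the roles of the attracting and repelling fixed points $p_S,p_N$ there. I proceed in three main steps.

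First I analyse the drift of $S_n(\eta)$ along the bi-infinite $F$-orbit of $(\omega,x)$. For $\nu_{1/2}\times\lambda$-a.e.\ $(\omega,x)$, the forward $F$-orbit accumulates on $\Lambda_l\subset\Sigma_2\times I_l$ and the backward $F$-orbit on $\Lambda_r\subset\Sigma_2\times I_r$, by the thick attractor/repeller construction of \cite{MR2644340}. Consequently the position-dependent step probability along this walk equals $p_l>1/2$ for $k\gg 0$ and $p_r<1/2$ for $k\ll 0$, and random-walk-in-environment theory (\cite[Section~I.12]{MR0217872}, exactly as in the proof of Theorem~\ref{t:northsouthintermingled}) yields a measurable $\pi(\omega,x)\in(0,1)$ equal to the $\zeta_{\omega,x}$-probability of $\{S_n(\eta)\to+\infty\}$, with complementary probability $1-\pi(\omega,x)$ for $\{S_n(\eta)\to-\infty\}$.

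Secondly I identify the $\omega$-limits. On the event $\{S_n(\eta)\to+\infty\}$, forward invariance of $I_l$ under $f_0,f_1$ forces $f^{S_n(\eta)}_\omega(x)$ to lie in $I_l$ from some step onwards, and by the pullback characterisation \eqref{e:Xlpullback} of $\Lambda_l$ together with the thick attractor property of $F$ restricted to $\Sigma_2\times I_l$, the third-coordinate orbit accumulates on $\Lambda_l$. Hence $\Omega\times\Lambda_l$ is a metric attractor whose basin carries $\mu$-positive measure; the symmetric argument for $F^{-1}|_{\Sigma_2\times I_r}$ handles $\Omega\times\Lambda_r$. Thickness is inherited from $0<\nu_{1/2}\times\lambda(\Lambda_l)<1$ and the analogous bound for $\Lambda_r$.

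Thirdly, for intermingling I take an arbitrary open $U\subset\Omega\times\Sigma_2\times[0,1]$ containing a product set $C_\eta\times C_\omega\times J$ with $C_\eta,C_\omega$ cylinders and $J$ an open interval. After finitely many iterations of $G$ the cylinder factors unfold to all of $\Omega\times\Sigma_2$, so the image has the form $\Omega\times\Sigma_2\times J_n$, and combining this with the first two steps places $\mu$-positive subsets of $U$ in each of the two basins. The likely-limit-set statement then follows by the Fubini/Lebesgue-density-point expansion argument from the end of the proof of Theorem~\ref{t:ifsintermingledT}. The main obstacle is the second step: unlike the pointwise attractors in Theorem~\ref{t:northsouthintermingled}, one must rule out accumulation on a proper $F$-invariant subset of $\Lambda_l$ (and similarly $\Lambda_r$); this is handled by the pullback characterisation together with ergodicity of $\chi$ with respect to $P=\nu_{p_l}\times\nu_{1/2}$ from the preceding lemma.
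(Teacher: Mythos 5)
Your overall strategy matches the paper's: reduce to a random walk along bi-infinite $F$-orbits, let the drift send $S_n(\eta)\to\pm\infty$ with probabilities $\pi$ and $1-\pi$, and use expansion of cylinders to spread both basins into every open set. You also correctly identify where the real difficulty lies: unlike the point attractors of Theorem~\ref{t:northsouthintermingled}, here one must verify the \emph{minimality} condition of the metric-attractor definition, i.e.\ that no strictly smaller closed subset of $\Omega\times\Lambda_l$ has (up to measure zero) the same basin. However, you do not actually supply that argument.

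The phrase ``this is handled by the pullback characterisation together with ergodicity of $\chi$'' is doing far more work than it can bear. In the paper, this step requires the full machinery of Ilyashenko's thick-attractor analysis \cite{MR2644340}: one introduces ``good'' measures as weak limits of Ces\`aro averages $\frac1n\sum_{k<n}G^k_*\mu|_{\mathcal K}$ for a carefully chosen set $\mathcal K$ squeezed between the graph of $X_l$ and $\Sigma_2\times\{l\}$, shows that any such limit assigns positive mass to $\operatorname{graph}(X_l)$, and \emph{then} uses ergodicity of $\chi$ with respect to $P=\nu_{p_l}\times\nu_{1/2}$ to upgrade ``positive $P$-measure of fibers over $\operatorname{graph}(X_l)$ in the minimal attractor'' to ``full $P$-measure.'' Even after that, one still needs a density argument (the analogue of \cite[Lemma~4]{MR2644340}) showing that every open set around a point $(\eta,\omega,x)$ with $0<x<X_l(\omega)$ contains a positive-$P$-measure set of $(\eta',\omega')$ with $(\eta',\omega',X_l(\omega'))$ in it; the paper constructs such points explicitly by splicing in a finite word $\beta$ mapping a density point $x_0$ of the distribution of $X_l$ into the relevant interval. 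None of that appears in your proposal.

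A smaller point: your statement that after finitely many $G$-iterates ``the cylinder factors unfold to all of $\Omega\times\Sigma_2$, so the image has the form $\Omega\times\Sigma_2\times J_n$'' is not quite right. Under $G$ the $\Sigma_2$-coordinate shifts by the random amount $S_n(\eta)$, not deterministically; the image of a product cylinder is a union of pieces on which $f^{S_n(\eta)}_\omega$ differs, not a single product set $\Omega\times\Sigma_2\times J_n$. The intermingling argument can still be made, but it has to be phrased in terms of positive measure of hitting each attractor's basin starting from a small product set (as you do elsewhere by invoking $\pi\in(0,1)$), not in terms of the whole product set unfolding.

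So: same skeleton as the paper, but the central minimality step is left as a one-sentence appeal rather than an argument, and this is precisely the non-trivial content that distinguishes the thick-attractor case from Theorem~\ref{t:northsouthintermingled}.
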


\begin{proof}
The identity
\[
G \left( \eta , \omega, X_l (\omega)  \right) =
\left( \sigma \eta , \sigma^{\eta_0} \omega , X_l (\sigma^{\eta_0} \omega) \right)
\]
implies that $\Omega \times \Lambda_l$ is invariant for $G$.
The same is true for $\Omega \times \Lambda_r$.
As in the proof of Theorem~\ref{t:northsouthintermingled} we find that $\mu$-almost all orbits
converge to either $\Omega\times \Lambda_l$ or $\Omega\times\Lambda_r$. It remains to show that these sets are thick {metric} attractors.

To show that $\Omega\times \Lambda_l$ is a thick {metric} attractor we follow  \cite[Lemma~2]{MR2644340}.
Let us summarize this approach for the skew product system $F$ on $\Sigma_2 \times I_l$ over the shift $\sigma$, and then discuss how to adapt it to the setting of a skew product over $\chi$.

An invariant measure for $F$ is called good if it a weak limit of $\frac{1}{n} \sum_{k=0}^{n-1} F^k_* (\nu_{1/2} \times \lambda)$. Put
$$A_{\text{min}}:=\text{Cl}\big( \bigcup_{\mu\,\text{is a good measure}} \text{Supp}(\mu)\big).$$
Following the notation of \cite{MR2644340}, we call $A_{\text{min}}$ the minimal attractor of $F$. By \cite{MR1135904,MR1409419}, $A_{\text{min}}\subseteq\Lambda_r$ and furthermore,  as shown in \cite{MR2644340},
\[
\nu_{1/2} \left(  \left\{ \omega \in \Sigma_2\; ; \; (\omega,X_l(\omega)) \in A_{\text{min}}  \right\} \right) = 1,
\]
after which the proof can be concluded by studying properties of $X_l$.

Now, we turn to the skew product $G$. Put
\[
E  := \left\{ \eta \in \Omega \; ; \; \sum_{i=0}^{\infty} \eta_i   = \infty\right\}
\]
and note that $\nu_{p_l} (E)=1$.
For a point $p \in \Omega\times\Sigma_2\times [0,1]$, write $x(p)$ for the coordinate in $[0,1]$.
For $\eta \in E$ and $\omega\in\Sigma_2$  there exists $y := y(\eta,\omega) \in (X_l(\omega),l)$ so that
$x (G^n (\eta,\omega,u)) \le l$ for all $u \le y$ and $n\ge 0$. In fact
$G^n (\eta,\omega,y))$ converges to the graph of $X_l$ as $n\to \infty$.
The set
\[
\mathcal{K} := \{ (\eta,\omega,u) \in \Omega\times \Sigma_2\times I_l \; ; \; \eta \in E,  X_l(\omega) < u < y(\eta,\omega) \}
\]
forms a set of positive measure $\mu(\mathcal{K}) >0$.
Since the $\text{graph}(X_l)$ is $G$-invariant, $G^k_*(\mu|_\mathcal{K})(\mathcal{K})=\mu(\mathcal {K})>0$, for any $k$. On the other hand, $G^k(\mathcal{K})$ tends to $\text{graph}(X_l)$. If $\mu_\infty$ is a weak limit point of the sequence $\mu_n := \frac{1}{n} \sum_{k=0}^{n-1}  G^k_* \left(\left.\mu\right\vert_{\mathcal{K}}\right)$, a good measure, then we get
\[
\mu_\infty (\text{graph}\, (X_l) )=\lim_{n\to\infty}\mu_n (\text{graph}\, (X_l) )\ge \mu (\mathcal{K} ) > 0. \]

Hence (denoting the minimal attractor of $G$ by $A_{min}$)
\[
P \left\{ (\eta,\omega) \in \Omega\times \Sigma_2 \; ; \;  (\eta,\omega,X_l (\omega)) \in A_{min}  \right\} > 0.
\]
Ergodicity of $\chi$ implies that this measure is $1$:
\[
P\left(  \left\{ (\eta,\omega) \in \Omega\times\Sigma_2\; ; \; (\eta,\omega,X_l(\omega)) \in A_{min}  \right\} \right) = 1.
\]

The reasoning
of \cite[Lemma~4]{MR2644340} can be followed to conclude that the {metric} attractor
equals $\Omega \times (\Lambda_l \medcup \Lambda_r)$.
This can be concluded from the next property which we claim to hold:
for any $(\eta,\omega,x) \in \Omega\times \Sigma_2\times I_l$ with $0 < x < X_l(\omega)$ and any neighborhood $U$ of $(\eta,\omega,x)$ there is set $S \subset \Omega \times \Sigma_2$ with $P(S) >0$ and  $(\eta',\omega', X_l (\omega')) \in U$ for $(\eta',\omega') \in S$.

To see this, take $(\hat{\eta},\omega,\hat{x}) \in U$ with $\hat{\eta} \in E$. Take an open neighborhood $V \subset U$ of $(\hat{\eta},\omega,\hat{x})$.
We may assume $V$ is of the form $C_\Omega \times C_\Sigma \times J$ for cylinders $C_\Omega = [\hat{\eta}_{-m}, \ldots, \hat{\eta}_m]$ , $C_\Sigma = [\omega_{-n}, \ldots, \omega_n]$ and an open interval $J$.
For $n$ given and large we can take $m\ge n$ so that  $\chi^{-m} (\hat{\eta} , \omega) =  (\sigma^{-m} \hat{\eta} , \sigma^{-n} \omega)$. We can moreover take $m$ so that $\chi^{-i}(\hat{\eta} , \omega)$, $-m \le i \le 0$,  only involves $\omega_j$ with $-n \le j$.
Let $J_n$ be such that
\[
f^{n}_{ \sigma^{-n} \omega } (J_n) = J.
\]

We will find points $(\eta',\omega', X_l (\omega')) \in V$.
Consider the distribution function
\[
\Phi (x) = P(  \{ (\eta,\omega) \in \Omega\times \Sigma_2 \; ; \; X_l (\omega) \le x \})
= \nu_{1/2} (  \{ \omega \in \Sigma_2 \; ; \; X_l (\omega) \le x \}).
\]
As this is a monotone function, its derivative $\psi$ exists almost everywhere. Let $x_0$ be such that $\psi(x_0)>0$. Then $P(S_\delta) >0$ for $S_\delta = \{ (\eta,\omega) \; ; \; |X_l (\omega)-x_0| \le \delta \}$.
By \cite{MR2644340} there exists a word $\beta$ with $f^k_\beta (x_0) \in J_n$.
Let $\omega'_i = \omega_i$ for $-n \le i \le n$ and $\omega'_{-n+i} = \beta_i$ for $-k \le i \le -1$. Let $\eta'_i = \hat{\eta}_i$ for $-m \le i \le m$ and $\eta'_{-m+i} = `"+1"$ for $-k \le i \le -1$.
We have
$x (G^{m+k} (\chi^{-(m+k)} (\eta',\omega') , x_0)) \in J$.
For $(\bar{\eta},\bar{\omega},x)$ with $x = X_l (\bar{\omega})$ within distance $\delta$ of $x_0$ concatenate
$(\tilde{\eta},\omega)$ on the left with the negative part of $(\bar{\eta},\bar{\omega})$:
$\eta'_{-m+k+i} = \bar{\eta}_{i}$ for $i < 0$ and
$\omega'_{-n+k+i} = \bar{\omega}_{i}$ for $i < 0$.
This provides the required positive measure set of sequences.
\end{proof}

\subsection{Multiple thick metric attractors with intermingled basins}

We discuss a slightly different approach and a possible extension leading to examples
with multiple thick metric attractors. We will only sketch the constructions and will not provide complete arguments.

\subsubsection{An alternative construction}\label{One Dimensional}

Take two diffeomorphisms $f_0, f_1$ on $[0,1]$ with graphs as depicted in the left panel of Figure~\ref{f:thick-attractor-attractor}.
\begin{figure}[!ht]
	\begin{center}
		\input{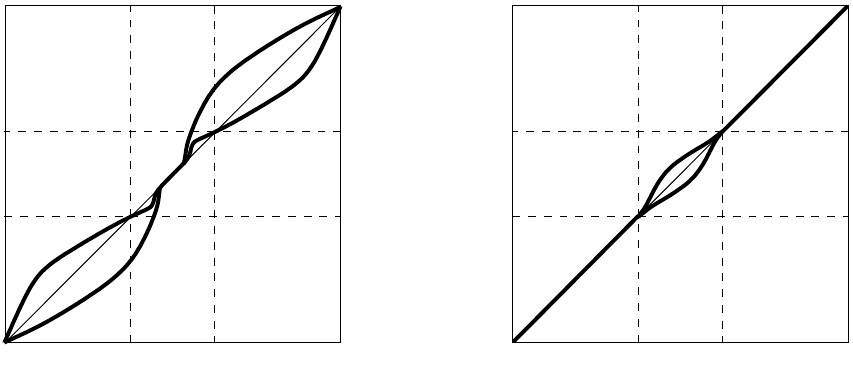_t}
		\caption{\label{f:thick-attractor-attractor} The skew product system corresponding to the iterated function system generated by maps $f_0,f_1$ in left panel, admits two thick attractors. Taking a random walk on its orbits, and adding a composition with additional random choice from maps $\phi_0,\phi_1$ as in the right panel, creates thick metric attractors with intermingled basins.   }
\end{center}
\end{figure}
The iterated function system generated by $f_0$ and $f_1$ provides a skew product system
$F:\Sigma_2 \times [0,1] \righttoleftarrow$
given by $F(\omega,x) = (\sigma\omega , f_{\omega_0} (x))$.
Take on $\Sigma_2 \times [0,1]$ a reference measure $\nu_{1/2}\times \lambda$.
The map $F$ has two thick attractors, a thick attractor  $\Lambda_l \subset \Sigma_2 \times [0,l]$ and a thick attractor  $\Lambda_r \subset \Sigma_2 \times [r,1]$.
With $\phi_0,\phi_1$ diffeomorphisms on $[0,1]$ as shown in the right panel of Figure~\ref{f:thick-attractor-attractor},
let
$H : \Sigma_2 \times  \Omega \times \Sigma_2 \times [0,1] \righttoleftarrow$
be given by
 \[
H(\xi,\eta,\omega,x) = \left\{
\begin{array}{ll}
                          \Big( \sigma \xi, \sigma \eta , \sigma \omega, \phi_{\xi_0} \circ f_{\omega_0}(x)\Big), & \eta_0 =  +, \vspace{0.1cm}
                          \\
  \Big( \sigma \xi, \sigma \eta , \sigma^{-1} \omega, \phi_{\xi_0}\circ f_{\omega_{-1}}^{-1}(x)\Big), &  \eta_0 =  -.
\end{array}
\right.
\]
Without the maps $\phi_0,\phi_1$ this is the skew product corresponding to a random walk along orbits of $F$.  The additional compositions with $\phi_0,\phi_1$ provide a random walk between basins of attraction of the two thick metric attractors.

\begin{theorem}\label{t:tma-alternative}
Consider the skew product system $H$  on  $\Sigma_2 \times  \Omega \times \Sigma_2 \times [0,1]$.
Take $\nu_{1/2} \times \nu_p \times \nu_{1/2}\times \lambda$ with $1/2 < p < 1$ as reference measure.
The sets $\Sigma_2  \times  \Omega \times \Lambda_l$ and  $\Sigma_2  \times  \Omega \times \Lambda_r$ are thick metric attractors
for $H $with intermingled basins.
\end{theorem}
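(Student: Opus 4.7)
The plan combines the thick attractor construction of Subsection~\ref{subs:thick} with the intermingling mechanism of Theorem~\ref{t:northsouthintermingled}: the $F$-dynamics together with the drifted random walk (since $p>1/2$) produces thick attraction to either $\Lambda_l$ or $\Lambda_r$ depending on which of $I_l$ or $I_r$ the orbit currently sits in, while the extra composition with $\phi_0,\phi_1$ in the right panel of Figure~\ref{f:thick-attractor-attractor} provides the transitions between the two regions needed for intermingling. I would carry out the proof in four steps.

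First I would verify that $\Sigma_2\times\Omega\times\Lambda_l$ and $\Sigma_2\times\Omega\times\Lambda_r$ are $H$-invariant and of positive $\mu$-measure. Invariance of the graphs $\{(\omega,X_l(\omega))\}$ and $\{(\omega,X_r(\omega))\}$ under both $F$ and $F^{-1}$ through the pullback \eqref{e:Xlpullback} and its $F^{-1}$ analogue already gives invariance for the $\phi$-free part of $H$, and the $\phi_0,\phi_1$ are chosen to preserve both graphs by restricting to each of $I_l$ and $I_r$. Positivity of measure is inherited from $\mu=\nu_{1/2}\times\nu_p\times\nu_{1/2}\times\lambda$ together with $(\nu_{1/2}\times\lambda)(\Lambda_l),(\nu_{1/2}\times\lambda)(\Lambda_r)>0$ from Subsection~\ref{subs:thick}.

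Next I would establish attraction. Because $p>1/2$, for $\nu_p$-a.e.\ $\eta\in\Omega$ the partial sum $S_n(\eta)$ drifts to $+\infty$, so the $F$-part of $H$ effectively performs many forward iterates of $F$ whose composed effect attracts the $x$-coordinate in $I_l$ to the graph of $X_l$ via the Pesin-style pullback estimates used in the preceding theorem; a symmetric argument based on long negative excursions of the random walk attracts points in $I_r$ to the graph of $X_r$. Because $\phi_{\xi_0}$ is a bounded diffeomorphism that preserves both graphs by Step~1, it acts as a harmless perturbation of the $F$-only analysis. For intermingling I would take an open set $U\subset\Sigma_2\times\Omega\times\Sigma_2\times[0,1]$ and reduce to a product $C_\xi\times C_\eta\times C_\omega\times J\subset U$ of cylinders and an interval. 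After iterating $H$ past the depths of the cylinders, the $\xi$-coordinate freely prescribes which of $\phi_0,\phi_1$ is applied at each step, and since these two maps together connect $I_l$ with $I_r$, I can select positive-$\mu$-measure sub-collections of $\xi$-sequences steering the image of $J$ into each of the two intervals; applying Step~2 then delivers positive basin mass of both attractors inside $U$. The likely-limit-set claim follows by Fubini plus a density-point argument as in Theorems~\ref{t:ifsintermingledT} and the preceding theorem in this section.

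The main obstacle is Step~1: the choice of $\phi_0,\phi_1$ must simultaneously preserve the graphs $X_l,X_r$ (so that the candidate attractors are genuinely invariant under $H$ and so that the pullback argument producing $X_l$ survives the extra composition) and provide enough mixing between $I_l$ and $I_r$ for Step~3 to succeed. Reconciling these two requirements — essentially by building each $\phi_i$ as a pair of diffeomorphisms, one on $I_l$ and one on $I_r$, together with the explicit connecting behavior visible in Figure~\ref{f:thick-attractor-attractor} — is where the technical work concentrates; once it is in place, the remainder of the proof follows the templates already developed in Subsection~\ref{subs:thick} and Theorem~\ref{t:northsouthintermingled}.
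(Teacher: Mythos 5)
Your proposal follows the same overall template as the paper's sketch: establish that the two sets are thick metric attractors (invariance plus positive basin measure), and then prove intermingling by producing, from any open set $U$, positive-measure cylinders in $\Sigma_2\times\Omega\times\Sigma_2$ that steer the $[0,1]$-coordinate into $[0,l]$ and into $[r,1]$ respectively. That cylinder argument is exactly the paper's \eqref{e:nuto0l>0}--\eqref{e:nutor1>0}, and your observation that the $\phi_i$ are identity on $I_l$ and $I_r$ (hence preserve both $\Lambda_l$ and $\Lambda_r$) is the right way to secure invariance.

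However, there is a substantive misconception in your Step~2. You write that ``a symmetric argument based on long negative excursions of the random walk attracts points in $I_r$ to the graph of $X_r$.'' This is false in the present construction, and it would in fact break the proof if carried through: with $p>1/2$ the random walk $S_n(\eta)$ drifts to $+\infty$ for $\nu_p$-a.e.\ $\eta$, so arbitrarily long negative excursions occur only on a $\nu_p$-null set, and a set attracted only along such excursions would have basin of measure zero and could not be a metric attractor. The point of the \emph{alternative} construction of Subsection~\ref{One Dimensional} is precisely that the maps $f_0,f_1$ of the left panel of Figure~\ref{f:thick-attractor-attractor} are chosen so that $F$ itself has \emph{two thick attractors} $\Lambda_l$ and $\Lambda_r$ (not an attractor/repeller pair as in Subsection~\ref{subs:thick}). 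Consequently both intervals $I_l$ and $I_r$ are contracted toward $\Lambda_l$ and $\Lambda_r$ respectively under \emph{forward} iteration of $F$, and the single constant drift $p>1/2$ suffices for both. You have imported the attractor/repeller picture from the first construction, where a position-dependent probability ($p_l>1/2$ on $I_l$, $p_r<1/2$ on $I_r$) was essential; here the reference measure uses the position-independent $\nu_p$, and no negative drift is available. Correcting this, the attraction to $\Lambda_r$ follows from the same forward-drift Pesin-style estimate as for $\Lambda_l$. With that repair your argument aligns with the paper's; the negative excursions play a role only in providing occasional escapes from $I_l$ and $I_r$ into the transition region where $\phi_0,\phi_1$ act, which is the mechanism behind the cylinder construction for intermingling.
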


\begin{proof}[Sketch of proof]
Write $\Pi: \Sigma_2 \times  \Omega \times \Sigma_2 \times [0,1] \to [0,1]$ for the coordinate projection to the last coordinate in $[0,1]$.
As before in Section~\ref{subs:thick} we find that $\Sigma_2  \times  \Omega \times \Lambda_l$ and  $\Sigma_2  \times  \Omega \times \Lambda_r$ are thick metric attractors.

For any $x \in [0,1]$,
 \begin{align}\label{e:nuto0l>0}
\nu_{1/2} \times \nu_p \times \nu_{1/2} \left\{ (\xi,\eta,\omega) \; ; \; \Pi H^n (\xi,\eta,\omega,x) \in [0,l]  \right\} &> 0,
\end{align}
since one can find a cylinder $C \subset \Sigma_2  \times  \Omega \times \Sigma_2$
so that $\Pi H^n (\xi,\eta,\omega,x) \in [0,l]$ for $(\xi,\eta,\omega) \in C$.
Likewise
\begin{align}\label{e:nutor1>0}
\nu_{1/2} \times \nu_p \times \nu_{1/2} \left\{ (\xi,\eta,\omega) \; ; \; \Pi H^n (\xi,\eta,\omega,x) \in [r,1]  \right\} &> 0.
\end{align}
From this we get that for any open set $U \subset \Sigma_2 \times  \Omega \times \Sigma_2 \times [0,1]$, the basin of attraction of $\Sigma_2  \times  \Omega \times \Lambda_l$ and of  $\Sigma_2  \times  \Omega \times \Lambda_r$ intersect $U$ in a set of positive reference measure.
So $H$  admits two thick metric attractors with intermingled basins.
\end{proof}

\subsubsection{Multiple thick metric attractors with intermingled basins}

We point out possible extensions to skew product systems with multiple thick metric attractors
and also  multiple thick metric attractors with mutually intermingled basins.
\begin{figure}[!ht]
	\begin{center}
        \input{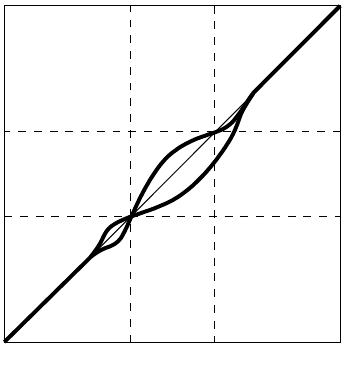_t}
		\caption{\label{f:ingredient} We consider an iterated function system generated by diffeomorphisms $f_0$ and $f_1$ on $[0,1]$ with graphs as depicted. There is an invariant interval $[l,r]$.   }
\end{center}
\end{figure}
Consider an iterated function system on
$[0,1]$ generated by functions $f_0,f_1$ with graphs as depicted in Figure~\ref{f:ingredient}.
There is an invariant interval $I = [l,r]$ (mapped into itself by both maps).
Outside a larger interval $\tilde{I}$ the  maps are the identity maps.

For  a positive integer $K_1$,  take $K_1$ mutually disjoint intervals $\tilde I_1,\ldots,\tilde I_{K_1}$ inside $\mathbb{T}$, with subintervals $I_i \subset \tilde I_i$. Let $f_0, f_1$
be diffeomorphisms on $\mathbb{T}$ that are identity maps outside $\tilde I_1 \medcup \cdots \medcup \tilde I_{K_1}$ and on $\tilde I_i$ have graphs as depicted in Figure~\ref{f:ingredient}. For a second set of intervals
 $J_1 \subset \tilde{J}_1,\ldots,J_{K_2}\subset \tilde{J}_{K_2}$  with the $\tilde{J}_i$'s mutually disjoint, take a similar set of diffeomorphisms
$g_0,g_1$ on $\mathbb{T}$.
Both iterated function systems, generated by $\{ f_0,f_1\}$ and $\{g_0,g_1\}$ have corresponding skew product systems. Take the product of these.
That is, consider the skew product system
$F : \Sigma_{2}\times\Sigma_{2} \times \mathbb{T}^2 \righttoleftarrow$
defined by
\begin{align}\label{e:G}
F(\omega , \zeta ,  x , y) &= \left(\sigma \omega , \sigma \zeta , f_{\omega_0} (x) , g_{\zeta_0} (y) \right).
\end{align}
On both of the  $\Sigma_{2}$'s we take $(1/2,1/2)$-Bernoulli measure $\nu_{1/2}$.
The skew product system $(\omega,x)\mapsto (\sigma\omega,f_{\omega_0} (x))$ has $K_1$ thick attractors
$\Lambda_i \subset \Sigma_2 \times I_i$. The skew product system $(\zeta,y)\mapsto (\sigma\zeta,g_{\zeta_0} (y))$ has $K_2$ thick attractors
$\Xi_j \subset \Sigma_2 \times J_j$.
As in \cite{MR2644340} one can show that $F$ admits $K_1 K_2$ thick attractors
$\Lambda_i\times \Xi_j \subset \Sigma_2 \times \Sigma_2 \times I_i \times J_j$, $1 \le i \le K_1, 1 \le j \le K_2$.

To construct examples of systems with thick metric attractors and mutually intermingled basins,
we use the above strategy of introducing random walks along orbits, and compose with additional random walks
outside the squares $I_i \times J_j$, $1 \le i \le K_1, 1 \le j \le K_2$.
Write
\begin{align*}
\mathcal{I}_x = \bigcup_{\{1\le i \le K_1\}} I_{i}\times \mathbb{T},\qquad
\mathcal{I}_y = \bigcup_{\{1\le j \le K_2\}} \mathbb{T} \times I_{j},
\end{align*}
and
\[\mathcal{K}_x = \mathbb{T}^2 \setminus \mathcal{I}_x, \qquad \mathcal{K}_y = \mathbb{T}^2 \setminus \mathcal{I}_y.
\]
Take a diffeomorphism $h_0$, near the identity map,
of the form
$
h_0 (x,y) = (x + \phi^x (x,y), y)
$
where $\phi^x = 0$ on $\mathcal{I}_y$ and $\phi^x  > 0$ on $\mathcal{K}_y$.
Let $h_1 = h_0^{-1}$.
Take another diffeomorphism $h_2$, near the identity map,
of the form
$
h_2 (x,y) = (x, y  + \phi^y (x,y))
$
where $\phi^y = 0$ on $\mathcal{I}_x$ and $\phi^y  > 0$ on $\mathcal{K}_x$.
Let $h_3 = h_2^{-1}$.
For $i = 1,2,3,4$,
let
$H_i:\Sigma_2 \times \Sigma_2 \times \mathbb{T}^2\righttoleftarrow$ be given by
\[
H_i ( \omega,\zeta , x ,y) = (\omega,\zeta, h_i(x,y)).
\]
Define
$H:\Sigma_4 \times \Omega \times \Sigma_2 \times \Sigma_2 \times \mathbb{T}^2\righttoleftarrow$ by
\[
H(\xi , \eta , \omega,\zeta,x,y) = \left(\sigma \xi , \sigma \eta , H_{\xi_0} \circ F^{\eta_0} (\omega,\zeta,x,y)\right).
\]
Endow $\Sigma_4$ with $(1/4,1/4,1/4,1/4)$-Bernoulli measure $\nu_{1/4,1/4,1/4,1/4}$ and $\Omega$ with
$(p,1-p)$-Bernoulli measure $\nu_p$ for $1/2 < p < 1$.

\begin{theorem}
Consider the skew product system $H$  on  $\Sigma_4 \times \Omega \times \Sigma_2 \times \Sigma_2 \times \mathbb{T}^2$.
Take $\nu_{1/4,1/4,1/4,1/4}\times \nu_p \times \nu_{1/2} \times \nu_{1/2} \times \lambda$ as reference measure.
The sets
$\Sigma_4 \times \Omega \times \Lambda_i\times \Xi_j$, $1 \le i \le K_1, 1 \le j \le K_2$ are thick metric attractors
for $H $ with mutually intermingled basins.
\end{theorem}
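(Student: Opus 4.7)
The plan is to extend the two-step argument of Theorem~\ref{t:tma-alternative} to the present multi-attractor setting.

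\textbf{Step 1 (invariance and thickness).} The critical observation is that $h_0,\ldots,h_3$ all restrict to the identity on $\mathcal{I}_x \medcap \mathcal{I}_y$, since $\phi^x$ vanishes on $\mathcal{I}_y$ and $\phi^y$ vanishes on $\mathcal{I}_x$. Consequently, on each square $I_i \times J_j$ the map $H$ acts as a pure random walk over the product skew product $F$, so that each set $\Sigma_4 \times \Omega \times \Lambda_i \times \Xi_j$ is $H$-invariant. To show it is a thick metric attractor I would transplant the argument of Section~\ref{subs:thick}: the base dynamics is the natural analogue of $\chi$ over $\Sigma_4 \times \Omega \times \Sigma_2 \times \Sigma_2$, whose ergodicity would follow from a Hopf argument analogous to the one given in the lemma there. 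Combining the pullback description of the graphs defining $\Lambda_i \times \Xi_j$ with the good-measure construction of~\cite{MR2644340}---already used to produce the $K_1 K_2$ thick attractors of $F$---yields the thick attractor conclusion for $H$.

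\textbf{Step 2 (intermingled basins).} Fix a nonempty open set $U$ and a target attractor $\Sigma_4 \times \Omega \times \Lambda_i \times \Xi_j$. Shrinking to a product of cylinders $C_\xi \times C_\eta \times C_\omega \times C_\zeta$ with an open set $V \subset \mathbb{T}^2$, I would prescribe an initial block in $C_\xi$ corresponding to a finite concatenation of applications of $h_0,h_1,h_2,h_3$ (interspersed with $F^{\pm 1}$ iterates prescribed by a block in $C_\eta$) that moves a point of $V$ into the interior of $I_i \times J_j$. This is possible because on $\mathcal{K}_y$ the maps $h_0,h_1$ effect a nontrivial motion in the $x$-direction while fixing $y$, on $\mathcal{K}_x$ the maps $h_2,h_3$ do the same in $y$, and $F^{\pm 1}$ can push points out of any given square into the connecting set $\mathcal{K}_x \medcup \mathcal{K}_y$. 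Once the orbit is brought into $I_i \times J_j$, the positive-measure basin identified in Step~1 captures a positive-measure subset of $U$, exactly as in the proof of Theorem~\ref{t:tma-alternative}.

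\textbf{Main obstacle.} The most delicate point will be Step~1: verifying that the Hopf-style ergodicity argument of Section~\ref{subs:thick} extends to the enlarged base $\Sigma_4 \times \Omega \times \Sigma_2 \times \Sigma_2$, and that composition with $h_{\xi_0}$ outside the invariant squares does not disturb the graph-of-$X$ structure on which the thick-attractor identification rests. The invariance $\left. h_i \right\vert_{\mathcal{I}_x \medcap \mathcal{I}_y} = \mathrm{Id}$ is what rescues this, but one must still verify that the enlarged base continues to admit the local stable/unstable foliations required for Hopf's argument, and that the reachability exploited in Step~2 is compatible with the pullback construction of the attractor graphs.
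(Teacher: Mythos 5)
Your proposal follows essentially the same route as the paper, whose own proof is a brief sketch that reduces intermingledness to the reachability claim that for any $(x,y)\in\mathbb{T}^2$ some cylinder in $\Sigma_4 \times \Omega \times \Sigma_2 \times \Sigma_2$ drives the torus coordinate into a prescribed square $I_i \times J_j \subset \mathcal{I}_x \medcap \mathcal{I}_y$, and inherits thickness from the argument of Theorem~\ref{t:tma-alternative}; your Steps~1 and~2 correspond to exactly these two points, with your observation that every $h_i$ is the identity on $\mathcal{I}_x \medcap \mathcal{I}_y$ being the (implicit) design feature ensuring invariance of the attractors. One minor imprecision: only $F^{-1}$ (not $F^{+1}$) can move a point out of a square $I_i \times J_j$, since $f_0,f_1,g_0,g_1$ map $I_i$ and $J_j$ into themselves.
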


\begin{proof}[Sketch of proof]
The proof follows the lines of Theorem~\ref{t:tma-alternative}.
Properties \eqref{e:nuto0l>0} and \eqref{e:nutor1>0} get replaced by the following.
For any $(x,y) \in \mathbb{T}^2$, there is a cylinder in $\Sigma_4 \times \Omega \times \Sigma_2 \times \Sigma_2$ so that the torus coordinate of $H^n (\xi , \eta , \omega,\zeta,x,y)$ is in a given square  $I_i \times J_j \subset \mathcal{I}_x \medcap \mathcal{I}_y$.
\end{proof}


\bibliographystyle{plain}
\bibliography{intermingle}

\end{document}